\declaretheoremstyle[headfont=\normalfont]{normalhead}
\newtheoremstyle{mydef}
{\topsep}{\topsep}%
{}{}%
{\itshape}{}
{\newline}
{%
  \rule{\textwidth}{0.0pt}\\*%
  \thmname{#1}~\thmnumber{#2}\thmnote{\-\ #3}.\\*[-1.5ex]%
  \rule{\textwidth}{0.0pt}}%
\newtheorem{theorem}{Theorem}
\newtheorem{remark}{Remark}
\newtheorem{proposition}{Proposition}
\newtheorem{corollary}{Corollary}
\newtheorem{definition}{Definition}
\newcommand{\dimb}{\dim_B}
\newcommand{\udimb}{\overline{\text{dim}}_B}
\newcommand{\ldimb}{\underline{\text{dim}}_B}
\begin{document}

\title[]{Fractal Analysis of Hyperbolic Saddles\\ with Applications}
\author[V. Crnkovi\' c]{Vlatko Crnkovi\' c}
\address{University of Zagreb, Faculty of Electrical Engineering and Computing, Department of Applied Mathematics, Unska 3, 10000 Zagreb, Croatia; Hasselt University, Campus Diepenbeek, Agoralaan Gebouw D, 3590 Diepenbeek, Belgium}
\email{{vlatko.crnkovic@fer.hr}}

\author[R. Huzak]{Renato Huzak}
\address{Hasselt University, Campus Diepenbeek, Agoralaan Gebouw D, 3590 Diepenbeek, Belgium}
\email{{renato.huzak@uhasselt.be}}

\author[M. Resman]{Maja Resman}
\address{University of Zagreb, Faculty of Science, Department of Mathematics, Horvatovac 102a, 10000 Zagreb, Croatia}
\email{{maja.resman@math.hr}}

\date{May 31, 2023}

\subjclass[2010]{37C10, 28A75, 37C27, 37C29}
\keywords{Minkowski dimension, saddle-loops, $2$-cycles, cyclicity}

\begin{abstract}
In this paper we express the Minkowski dimension of spiral trajectories near hyperbolic saddles and semi-hyperbolic singularities in terms of the Minkowski dimension of intersections of such spirals with transversals near these singularities. We apply these results to hyperbolic saddle-loops and hyperbolic $2$-cycles to obtain upper bounds on the cyclicity of such limit periodic sets.
\end{abstract}

\maketitle

\section{Introduction}\label{sec1}

    The Minkowski dimension is a fractal dimension that quantifies how the Lebesgue measure of the $\delta$-neighborhood of a bounded set in $\mathbb{R}^N$ behaves as $\delta \to 0$.
    There are several equivalent ways of calculating this dimension but we mostly use the following one:
    
    \begin{definition}[\cite{falconer90}]
        Let $G$ be a bounded set in $N$-dimensional Euclidean space $\mathbb{R}^N$.
        Let
        \[ G_\delta \colon = \{ p \in \mathbb{R}^N \colon \text{dist}(p, G) < \delta\}, \delta > 0, \]
        be the $\delta$-neighborhood of $G$ and let $|G_\delta|$ be its Lebesgue measure.\\
        The upper and the lower Minkowski dimension of set $G$ are defined as the limits
        \[ \udimb\, G = \limsup_{\delta \to 0} \left[ N - \frac{ \ln |G_\delta|}{\ln \delta} \right]\quad \text{and}\quad \ldimb\, G = \liminf_{\delta \to 0} \left[ N - \frac{ \ln |G_\delta|}{\ln \delta} \right] \]
        respectively.
        If these two values are equal, the common value is called the Minkowski dimension of set $G$ and denoted by $\dimb\, G$.
    \end{definition}
    The Minkowski dimensions are preserved under bi-Lipschitz transformations, even when the image and the original are not in the same ambient space. More precisely, a transformation $\Psi: A \subset \mathbb{R}^N \to \mathbb{R}^K$ is called bi-Lipschitz if there exist positive constants $m$ and $M$ such that{\color{red},} for any $x,y \in A$, 
    \[ m || \Psi(x) - \Psi(y) || \leq ||x-y|| \leq M || \Psi(x) - \Psi(y)||. \]
    For a bi-Lipschitz map $\Psi: A \subset \mathbb{R}^N \to \mathbb{R}^K$ we have that
    \[  \ldimb A = \ldimb \Psi(A)\quad \text{and}\quad \udimb{A} = \udimb \Psi(A). \]
    All three Minkowski dimensions are monotone in the sense that, for $G\subseteq H, \dim G \leq \dim H$, when both are defined.
    In addition, the Minkowski dimension and the upper Minkowski dimension are finitely stable, meaning that $\dim\, (G \cup H)= \max \{ \dim\, G, \dim\, H\}$.
    For more on these and other properties of the Minkowski dimension we refer the reader to \cite{falconer90, tricot95}.
    \vspace{3mm}
    
    It is already known that the Minkowski dimension of spirals around weak foci and limit cycles of planar analytic vector fields yields information on the cyclicity of those limit periodic sets.
    First results of this type were obtained in \cite{zubzup05}.
    We state two main theorems that describe such connections.
    \begin{theorem}[Weak focus case, \cite{zubzup05, zubzup08}]
        Let $\Gamma$ be a spiral trajectory of the system
        \[ \begin{cases}
        \dot{r} = r(r^{2l} + \sum_{i=0}^{l-1} a_ir^{2i})\\
        \dot{\phi} = 1 \end{cases} \]
        near the origin.
        Then
        \begin{enumerate}
            \item[$(a)$] if $a_0 \neq 0$, then $\dimb \Gamma = 1$.
            \item[$(b)$] if $a_0 = a_1 = ... = a_{k-1} = 0,\ a_k \neq 0,\ k\geq 1$, then $\dimb \Gamma = \frac{4k}{2k+1}$.
        \end{enumerate}
    \end{theorem}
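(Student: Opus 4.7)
The plan is to parametrize $\Gamma$ in polar coordinates, use $\dot\phi \equiv 1$ to reduce the problem to a scalar ODE for $r$ as a function of $\phi$, extract the asymptotic decay rate of $r(\phi)$ from that ODE, and transfer this rate into a Minkowski dimension by intersecting $\Gamma$ with a transversal and invoking the classical correspondence between decay rates of monotone radial spirals and their fractal dimensions. This is essentially a concrete instance of the transversal-to-spiral principle that the paper develops in later sections.

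Dividing $\dot r$ by $\dot\phi$ yields the separable equation
\[ \frac{dr}{d\phi} = r^{2l+1} + \sum_{i=0}^{l-1} a_i r^{2i+1}. \]
In case $(a)$, the linear term $a_0 r$ dominates near $r=0$, and ODE comparison with $r' = a_0 r$ gives $r(\phi) = c\,e^{a_0\phi}\bigl(1+o(1)\bigr)$ as $\phi\to\pm\infty$ (the sign of $a_0$ dictating the time direction in which $\Gamma$ approaches the origin). The resulting logarithmic spiral has finite arc length in any neighborhood of the origin, so $\Gamma$ is rectifiable and hence $\dim_B \Gamma = 1$. In case $(b)$, the leading term is $a_k r^{2k+1}$; separating variables and integrating gives $r(\phi)^{-2k} = -2k a_k \phi + O(1)$, so $r(\phi) \sim C|\phi|^{-1/(2k)}$ as the spiral approaches $0$.

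To pass from this decay to $\dim_B \Gamma$, I would intersect $\Gamma$ with the positive $x$-axis to obtain a sequence $r_n := r(2\pi n) \sim c\,n^{-1/(2k)}$; a direct $\delta$-neighborhood estimate (or the well-known dimension formula for sequences of the form $\{n^{-\alpha}\}$) gives $\dim_B \{r_n\} = \tfrac{1}{1+1/(2k)} = \tfrac{2k}{2k+1}$. The spiral dimension then follows from the Žubrinić--Županović relation $\dim_B \Gamma = 2\dim_B \{r_n\}$, valid for monotone polynomially decaying radial spirals, yielding $\dim_B \Gamma = \tfrac{4k}{2k+1}$.

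The main obstacle is making the asymptotic equivalence $r(\phi) \sim C|\phi|^{-1/(2k)}$ uniform enough to sandwich $\Gamma$ between two standard power spirals $r = C_\pm|\phi|^{-1/(2k)}$ with $C_- < C < C_+$ in a full neighborhood of $0$; once that sandwich is in place, monotonicity and bi-Lipschitz invariance of the Minkowski dimension transfer the equality from the standard spirals to $\Gamma$. The delicate part is the quantitative control of the remainder in the separation-of-variables integral, where one must show that the higher-order perturbations $a_{k+1}r^{2k+3}+\dots+r^{2l+1}$ are subordinate to the leading term $a_k r^{2k+1}$ on all of a one-sided neighborhood of $\phi = \infty$, not merely in the limit.
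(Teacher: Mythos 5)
This theorem is stated in the paper as a quoted result from \cite{zubzup05,zubzup08}; the paper contains no proof of it, so your attempt can only be measured against the original arguments. Your overall strategy is the same as theirs: pass to the scalar equation $dr/d\phi=a_kr^{2k+1}+\dots$, extract the decay rate of $r(\phi)$, and convert it into a Minkowski dimension using the known results for power-like spirals and for their intersection sequences with a ray. Case $(a)$ (exponential decay, finite length, hence dimension $1$) is fine, and in case $(b)$ your numerology is right: $\dim_B\{r_n\}=\frac{2k}{2k+1}$ and $\dim_B\Gamma=\frac{2}{1+\frac{1}{2k}}=\frac{4k}{2k+1}$. (A small slip: the remainder in $r(\phi)^{-2k}=-2ka_k\phi+\dots$ is only $o(\phi)$, of order $\phi^{1-1/k}$, not $O(1)$ for $k\ge 2$; this does not affect the conclusion $r(\phi)\sim C|\phi|^{-1/(2k)}$.)

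The genuine gap is in the transfer step you propose at the end. Sandwiching $\Gamma$ between two power spirals $r=C_\pm|\phi|^{-1/(2k)}$ and invoking ``monotonicity and bi-Lipschitz invariance'' does not prove anything: monotonicity of the box dimension applies to set inclusion, and $\Gamma$ is neither contained in nor, on the strength of the sandwich alone, bi-Lipschitz equivalent to a standard power spiral. Comparability of the radius does not determine the dimension: a curve with $r(\phi)\simeq\phi^{-\alpha}$ but irregular gaps between successive windings can violate the formula, and likewise a monotone sequence with $r_n\sim cn^{-\alpha}$ but long nearly constant blocks can have box dimension strictly smaller than $\frac{1}{1+\alpha}$. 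What is actually needed (and what the hypotheses of the spiral theorem in \cite{zubzup05} and of the sequence results encode) is control of the \emph{gaps}: $r(\phi)-r(\phi+2\pi)\simeq\phi^{-1-\frac{1}{2k}}$ and $r_n-r_{n+1}\simeq n^{-1-\frac{1}{2k}}$. In your setting this comes for free from the ODE, since $|dr/d\phi|\simeq r^{2k+1}\simeq\phi^{-\frac{2k+1}{2k}}$ on a full one-sided neighborhood once the leading term dominates; with these estimates one either verifies the hypotheses of the cited theorem ($f$ decreasing, $f\simeq\phi^{-\alpha}$, $|f'|\simeq\phi^{-\alpha-1}$, $0<\alpha<1$) or carries out the nucleus--tail estimate of $|\Gamma_\delta|$ directly (a disk of radius $\simeq r(\phi_\delta)$ for the tightly wound part plus $\delta$ times the length of the finitely many outer windings). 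The factor-two relation $\dim_B\Gamma=2\dim_B\{r_n\}$ is then a consequence, and note it is only valid in the non-rectifiable regime $\alpha<1$, which holds here because $\alpha=\frac{1}{2k}\le\frac12$. So: right route and right asymptotics, but the final ``sandwich'' argument must be replaced by the gap estimates plus the standard $\delta$-neighborhood computation.
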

    \begin{theorem}[Limit cycle case, \cite{zubzup05, zubzup08}]
        Let the system
        \[ \begin{cases}
        \dot{r} = r(r^{2l} + \sum_{i=0}^{l-1} a_ir^{2i})\\
        \dot{\phi} = 1 \end{cases} \]
        have a limit cycle $r = a$ of multiplicity $m,\ 1\leq m \leq l$. Let $\Gamma_1$ and $\Gamma_2$ be spiral trajectories of this system near the limit cycle from outside or inside respectively.
        Then $\dimb \Gamma_1 = \dimb \Gamma_2 = 2 - \frac{1}{m}$.
    \end{theorem}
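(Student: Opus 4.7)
The plan is to reduce the dimension of $\Gamma_i$ to a one-dimensional problem on a transversal and then apply a Tricot-type result for sequences of the form $a_n \sim C n^{-\alpha}$.

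First, I would set up coordinates near the limit cycle. Writing $s = r - a$ and using the assumption that $r=a$ is a zero of multiplicity $m$ of the polynomial $r^{2l} + \sum_{i=0}^{l-1} a_i r^{2i}$, I would factor this polynomial as $(r-a)^m Q(r)$ with $Q(a) \neq 0$. The radial equation becomes $\dot s = (a+s)\, s^m\, Q(a+s)$, and because $\dot\phi = 1$, a full revolution takes time $2\pi$. Integrating from $t=0$ to $t=2\pi$ and Taylor expanding gives a Poincar\'e return map on the transversal $\Sigma = \{\phi = 0\}$ of the form
\[
P(s) = s - c\, s^m + O(s^{m+1}), \qquad c = -2\pi a\, Q(a) \neq 0,
\]
whose sign guarantees that the iterates $s_{n+1} = P(s_n)$ decay monotonically toward the cycle, both from outside ($\Gamma_1$) and inside ($\Gamma_2$).

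Next, I would extract the asymptotics of $s_n$. For $m = 1$ the iteration is a hyperbolic contraction, so $|s_n|$ decays geometrically; for $m \geq 2$, a standard inversion argument applied to $s_n^{1-m}$ yields
\[
s_n \sim \bigl((m-1)\,c\, n\bigr)^{-1/(m-1)} \quad \text{as } n \to \infty.
\]
A Tricot-type formula then supplies the box dimension of $\Gamma_i \cap \Sigma$: sequences $a_n \sim C n^{-\alpha}$ with $\alpha > 0$ satisfy $\dimb \{a_n\} = 1/(1+\alpha)$, so with $\alpha = 1/(m-1)$ we obtain
\[
\dimb (\Gamma_i \cap \Sigma) = 1 - \tfrac{1}{m} \qquad \text{for } m \geq 2,
\]
and $\dimb (\Gamma_i \cap \Sigma) = 0$ for $m=1$ (both cases unified by the formula $1 - 1/m$).

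Finally, I would pass from the transversal trace to the spiral itself. Since $\Gamma_i$ is obtained by flowing $\Gamma_i \cap \Sigma$ around in $\phi$ with constant angular velocity, it is essentially the product of this sequence with a circle, and one expects
\[
\dimb \Gamma_i = 1 + \dimb (\Gamma_i \cap \Sigma) = 2 - \frac{1}{m}.
\]
I expect this last step to be the main obstacle: a careful covering of the $\delta$-neighborhood of $\Gamma_i$ must separately account for coils still widely spaced at scale $\delta$ (which contribute a factor $1$ from arclength) and for consecutive coils that have already come within $\delta$ of one another (the fractal contribution controlled by the Tricot exponent). The remaining inputs---the normal form, the asymptotics of $s_n$, and the dimension of a power-type sequence---are essentially standard, though one must verify monotonicity of the iteration in a deleted neighborhood of $s=0$ independently on each side of the cycle.
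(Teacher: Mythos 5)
Your proposal is correct and follows essentially the same route as the paper and its cited references \cite{zubzup05,zubzup08,neveda07}: reduce to the orbit of the Poincar\'e map on a transversal, whose fixed point of multiplicity $m$ forces $s_n \simeq n^{-1/(m-1)}$ and hence box dimension $1-\frac{1}{m}$ for the trace, and then lift by one using the flow-box (product) structure of the spiral away from singular points, exactly the mechanism the paper invokes. The only detail to phrase more carefully is that for even $m$ the cycle is semi-stable, so on one side the trajectory accumulates on the cycle only in backward time; since the trajectory is the same point set under time reversal, this does not affect the dimension $2-\frac{1}{m}$.
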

    Due to the Flow-Box Theorem (see for instance \cite[Theorem 1.12]{dumllibart06}), in order to calculate the dimension of spiral trajectories near limit cycles, it is sufficient to calculate the dimension of a sequence of points obtained by intersecting any such spiral with a transversal to the limit cycle (i.e. of the orbit of the first-return map on the transversal). For the case of foci, there is a variant of the Flow-Box Theorem developped in \cite{zubzup08}, called \emph{Flow-Sector Theorem}, that allows similar relation. For more details, see \cite{zubzup08}.
    
    In addition, due to results from \cite{neveda07}, there is a direct correspondence between the multiplicity of a fixed point of a line diffeomorphism (the first return map) and the Minkowski dimension of its orbit converging to the fixed point and, as a consequence, with the cyclicity of a focus/limit cycle.
    \vspace{3mm}
    
    In this paper we deal with spiral trajectories near more complex limit periodic sets: polycycles containing saddles and/or saddle-nodes.
    For a hyperbolic saddle with eigenvalues $\lambda_- < 0$ and $\lambda_+ > 0$, the hyperbolicity ratio is the quantity $r = -\frac{\lambda_-}{\lambda_+} > 0$.
    We read an upper bound on cyclicity of those sets in some known cases from the box dimension of their spiral trajectories.
    A better understanding of the cyclicity of such limit periodic sets is crucial for tackling the Hilbert's $16$th problem.
    It is not possible to use the Flow-Box Theorem to calculate the Minkowski dimension of spiral trajectories accumulating on such polycycles (from within), because the theorem does not apply near singularities, so we need a new method to calculate the Minkowski dimension of parts of the trajectories near singular points.
    Our main results are stated in Theorem \ref{thm1} and Theorem \ref{thm2} of Section \ref{sec2} which deal with neighborhoods of a hyperbolic saddle and a semi-hyperbolic singularity respectively.

    In Section \ref{sec3} we apply Theorem \ref{thm1} to a saddle-loop and find a relation between the codimension of the saddle-loop (an upper bound on the cyclicity of the loop) and the Minkowski dimension of its spiral trajectories.
    The Minkowski dimension depends only on the codimension of the loop, but the correspondence is $2$--$1$. For a more precise formulation, see Theorem \ref{thm3}.

    Finally, in Section \ref{sec4}, we apply Theorem \ref{thm1} to a hyperbolic $2$-cycle, and compare to cyclicity results obtained in \cite{mourtada94}.
    To summarize, for a non-resonant hyperbolic $2$-cycle with ratios of hyperbolicity $r_1 < 1 < r_2$ such that $r_1r_2 = 1$ and $r_1, r_2\not\in\mathbb{Q}$, the cyclicity of the $2$-cycle is shown not to be greater than $3 + (1+r_1)\frac{d-1}{2-d}$, where $d$ is the Minkowski dimension of any spiral trajectory near the $2$-cycle.
\medskip

    In the sequel we use two notions for the asymptotic behavior of functions as $x\to 0$.
    For $f(x)$ and $g(x)$ two positive functions with $x \approx 0$ and $x > 0$, we write $$f(x) \simeq g(x),\ x \to 0,$$ if there exist two positive constants $m$ and $M$ such that $mg(x) \leq f(x) \leq Mg(x)$ for all $x$ sufficiently small.
   For $f(x)$ and $g(x)$ two positive functions with $x \approx 0$ and $x > 0$, we write $$f(x) \sim g(x),\ x \to 0,$$ if
    \[ \lim_{x \to 0+} \frac{f(x)}{g(x)} = 1. \]

\section{The main results}\label{sec2}

    Let us explain the basic idea behind our method for calculating the Minkowski dimension of spiral trajectories of a polycycle.
    Due to the finite stability of the Minkowski dimension, in order to compute the dimension of a spiral trajectory, we consider separately different parts of the spiral: parts near the singular points and parts near the regular sides of the polycycle.
    The Minkowski dimension of the entire spiral is the maximal dimension of its constituting parts.

    The Flow-Box Theorem allows us to calculate the dimension of parts near the regular sides of the polycycle, but we need a new tool to calculate the dimension of the remaining parts.
    For any transversal to a regular side of a polycycle, the points of intersection of the spiral with the transversal define a sequence $(y_n)_n$.
    The distance between consecutive points of the sequence eventually starts to decrease.
    We take two transversals, one on each side of the singular point and sufficiently close to the singular point, in the domain where the saddle can be brought to a simpler normal form (see the proofs of Theorems \ref{thm1} and \ref{thm2} in Section \ref{sec2}). Without loss of generality, in the normal form coordinates we assume the transversals $\{x=1\}$ (vertical) and $\{y=1\}$ (horizontal), and compute the dimension of the family of curves passing through a given sequence of points on the entry transversal and ending on exit transversal.
\medskip

Note that planar saddles and saddle-nodes, unlike planar foci or complex saddles, are not monodromic points, so that the first return map around the saddle/saddle-node point is not well-defined before we close the connection (as in the polycyle). Therefore, our first results in Theorems~\ref{thm1} and \ref{thm2}  concern the box dimension of a union of disjoint local trajectories in the neighborhood of the saddle/saddle-node singularity that correspond to (any) prescribed sequence of points on the entry transversal.
The expression for the Minkowski dimension of spiral trajectories accumulating on a polycycle is provided in Corollary \ref{cor1}.
    \bigskip
    
    Let $s$ be a hyperbolic saddle or a semi-hyperbolic singularity of an analytic vector field.
    By $t_S$ and $t_U$ we denote the transversals to the stable and the unstable manifold of $s$ in the saddle case, i.e. the transversals to the stable (up to the time reversal preserving the geometry of the flow) and the center manifold in the semi-hyperbolic case (in the saddle region). We call $t_S$ also the \emph{entry} and $t_U$ the \emph{exit} transversal, due to obvious reasons. Up to the reversal of the time, without loss of generality we assume that the ratio of hyperbolicity of the saddle is greater than $1$ in the hyperbolic saddle case.
    Up to the change of the axes, we additionally assume that the stable, i.e. entry, transversal is the vertical transversal $\{x=1\}$.
 
    We take $(y_n)_n$ to be any sequence on $t_S$ that converges monotonically to the intersection of $t_S$ with the stable manifold, and such that the distances between consecutive points $y_n$ decrease monotonically (see Figure \ref{fig:saddle}). Let $(x_n)_n$ be the sequence of points where the trajectories $(\Gamma_n)_n$ of the vector field of the saddle/saddle-node $s$ going through $(y_n)_n$ intersect $t_U$.  Under the above assumptions it is not hard to see that $\dimb\, (y_n)_n \geq \dimb\, (x_n)_n$.
    We show that the Minkowski dimension of the union of trajectories $(\Gamma_n)_n$ between the points $y_n$ and $x_n$ is $$\dimb(\cup_n\Gamma_n)=1 + \dimb\, (y_n)_n.$$
    
    \begin{theorem}[Minkowski dimension of the hyperbolic saddle]\label{thm1}
        Let $s=0$ be a hyperbolic saddle of an analytic vector field with ratio of hyperbolicity $\frac{1}{\alpha} \geq 1$:
        $$
        \begin{cases}
        x'=-x + h.o.t.,\\
        y'=\alpha y + h.o.t.
        \end{cases}
        $$ Let $t_S,\ t_U$ and $(y_n)_{n\in\mathbb{N}}$ on $t_S$ be defined as above.
        If the sequence $(y_n)_{n\in\mathbb{N}}$ has  Minkowski dimension, $\dimb\, (y_n)_{n}$, then
        \[ \dimb\, \left(\cup_{n\in\mathbb{N}} \Gamma_n\right) = 1 + \dimb (y_n)_n.\]
    \end{theorem}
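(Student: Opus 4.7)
The plan is to bring the saddle to a smooth normal form in which trajectories are explicit, establish the lower bound via the Flow-Box Theorem at a regular point of the flow, and establish the upper bound by decomposing $\bigcup_n \Gamma_n$ into regions handled by Flow-Box plus a near-saddle region handled by a direct $\delta$-covering count. First I would invoke a $C^k$-linearization (e.g.\ by Sternberg's theorem, with $k$ large enough) to put the saddle into its normal form; in the non-resonant case this is the linear system $x' = -x,\ y' = \alpha y$, so each trajectory is the explicit graph $\Gamma_n = \{(x, y_n x^{-\alpha}) : x \in [x_n, 1]\}$ with $x_n = y_n^{1/\alpha} \in t_U$. The resonant case $1/\alpha \in \mathbb{Q}$ contributes logarithmic corrections that do not alter the asymptotic covering count, and since $C^1$-diffeomorphisms are bi-Lipschitz on compacta, this reduction preserves $\dimb$.

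For the lower bound $\ldimb \bigcup_n \Gamma_n \geq 1 + \ldimb(y_n)_n$, I pick a transversal to a regular point of the flow (say the vertical segment at $x = 1/2$). The Flow-Box Theorem provides a bi-Lipschitz identification of $\bigcup_n \Gamma_n$ restricted to a tubular neighborhood of this transversal with the product $I \times (y'_n)_n$, where $I$ is an interval and $(y'_n)_n$ is bi-Lipschitz to $(y_n)_n$. Such a product has Minkowski dimension $1 + \dimb(y_n)_n$, giving the lower bound.

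For the upper bound, fix a small $\epsilon > 0$ and decompose
\[
\bigcup_n \Gamma_n = A_1 \cup A_2 \cup A_3,
\]
where $A_1 = \bigcup_n\Gamma_n \cap \{x \geq \epsilon\}$, $A_2 = \bigcup_n\Gamma_n \cap \{y \geq \epsilon,\ x \leq \epsilon\}$, and $A_3 = \bigcup_n\Gamma_n \cap \{x \leq \epsilon,\ y \leq \epsilon\}$. Finite stability of $\udimb$ reduces the task to bounding each piece. Both $A_1$ and $A_2$ lie in regions bounded away from the origin, so Flow-Box applies and yields bi-Lipschitz identifications with subsets of $I \times (y_n)_n$ and $I \times (x_n)_n$ respectively. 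Since $x_n = y_n^{1/\alpha}$ with $1/\alpha \geq 1$ and the map $y \mapsto y^{1/\alpha}$ is Lipschitz on $[0,1]$, we have $\udimb(x_n)_n \leq \udimb(y_n)_n$, whence $\udimb A_1,\ \udimb A_2 \leq 1 + \udimb(y_n)_n$.

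The principal obstacle is $A_3$, where Flow-Box fails at the singularity. My plan is a direct $\delta$-covering count after further splitting $A_3 = A_3^H \cup A_3^V$ across the diagonal $\{y = x\}$. On $A_3^H = A_3 \cap \{y \leq x\}$ each $\Gamma_n$ is a graph over $x \in [y_n^{1/(1+\alpha)}, \epsilon]$ with slope $|dy/dx| = \alpha y/x \leq \alpha$, so the geometry is uniformly controlled. The minimum vertical separation between consecutive trajectories $\Gamma_n, \Gamma_{n+1}$ on $A_3^H$ is attained at $x = \epsilon$ and equals $|y_n - y_{n+1}|\,\epsilon^{-\alpha}$. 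Hence at scale $\delta$ the number of pairwise $\delta$-separated trajectories is of order $N((y_n)_n, \epsilon^\alpha \delta)$ (with $N$ the $\delta$-covering number), each requiring $O(\epsilon/\delta)$ covering balls, while unresolvable trajectories merge into $\delta$-tubes of comparable total cost. With $d = \udimb(y_n)_n$ this yields $N(A_3^H, \delta) \lesssim \epsilon^{1-\alpha d}\,\delta^{-(1+d)}$ and hence $\udimb A_3^H \leq 1 + d$. The symmetric argument with $x \leftrightarrow y$ gives $\udimb A_3^V \leq 1 + \udimb(x_n)_n \leq 1 + d$. The main technical subtlety will be making the merged-tube estimate rigorous; combined with the lower bound this establishes $\dimb \bigcup_n \Gamma_n = 1 + \dimb(y_n)_n$.
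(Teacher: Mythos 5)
Your overall architecture --- the Flow-Box Theorem for the lower bound and for the pieces away from the singularity, plus a separated/merged (tail/nucleus) counting argument for the near-saddle piece --- is essentially the paper's, and your covering estimate for $A_3$ is in substance the paper's bound $|\Gamma_\delta|\le C\delta n_\delta+(\text{area under }\Gamma_{n_\delta})+D\delta$ rewritten in box-counting language. The ``merged-tube'' issue you flag as the main subtlety is in fact not a problem: you do not need a clean dichotomy between everywhere-separated and everywhere-merged trajectories. Bound the tail by subadditivity ($n_\delta$ curves of uniformly bounded length, each costing $O(1/\delta)$ boxes, overlaps only help) and cover the whole region between the critical trajectory $\Gamma_{n_\delta}$ and the axes by its area divided by $\delta^2$ plus a perimeter term; this is exactly how the paper closes that step, via $y_{n_\delta}+\int_{y_{n_\delta}}^1 x(y)\,dy+D\delta$.

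The genuine gap is the reduction to the linear model. Sternberg's theorem does not give a smooth (or even $C^1$) linearization of a resonant saddle ($1/\alpha\in\mathbb{Q}$), and your one-line claim that the resonant case only contributes ``logarithmic corrections that do not alter the asymptotic covering count'' is unproven --- and it is precisely the nontrivial content of the theorem, because all of your explicit formulas (the graph $y=y_nx^{-\alpha}$, the slope bound $\alpha y/x$, the separation $|y_n-y_{n+1}|\,x^{-\alpha}$) are used quantitatively and none of them survives in the resonant normal form. The paper avoids linearization altogether: it passes to the $C^\infty$ orbital normal form $\dot x=-x$, $\dot y=\alpha y+h(x,y)$ with $h=O(xy^2)$, shrinks the neighborhood so that $|\alpha y+h(x,y)|\le(1+\beta)y$, integrates the resulting differential inequality to get $u\le(y_n/v)^{1/(1+\beta)}$ along trajectories, runs the nucleus/tail estimate with the exponent $1/(1+\beta)$, and then lets $\beta\to0$. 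To repair your argument you must either import such a comparison inequality, or replace Sternberg by Hartman's planar $C^1$-linearization theorem (valid for every hyperbolic singular point of a $C^2$ planar field, resonant or not), which does yield a locally bi-Lipschitz conjugacy to the linear model and would legitimize your explicit computations. As written, the resonant case is not covered.
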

    
    \begin{theorem}[Minkowski dimension of the semi-hyperbolic singularity]\label{thm2}
        Let $s=0$ be a semi-hyperbolic singularity  of an analytic vector field:
        $$
        \begin{cases}
        x'=-x + h.o.t.,\\
        y'=\alpha y^m + h.o.t.,\ \alpha>0,\ m\geq 2.
        \end{cases}
        $$
        Let $t_S,\ t_U$ and $(y_n)_{n\in\mathbb{N}}$ on $t_S$ be defined as above.
        If the sequence $(y_n)_{n\in\mathbb{N}}$ has the Minkowski dimension, $\dimb\, (y_n)_{n}$, then
        \[ \dimb\, \left(\cup_{n\in\mathbb{N}} \Gamma_n\right) = 1 + \dimb (y_n)_n.\]
    \end{theorem}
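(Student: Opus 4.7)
My plan is to proceed by close analogy with Theorem~\ref{thm1}, replacing the hyperbolic normal form by its semi-hyperbolic analogue. First, I would bring the vector field via a smooth change of coordinates to a Dulac-type normal form in which $\dot x=-x$ and $\dot y=\alpha y^m(1+O(y^{m-1}))$, so that trajectories can be integrated essentially explicitly. Discarding the lower-order perturbation, the trajectory through $(1,y_n)$ is
\[
x(y)=\exp\!\Big(\tfrac{1}{\alpha(m-1)}\bigl(y^{-(m-1)}-y_n^{-(m-1)}\bigr)\Big),
\]
and meets $t_U=\{y=1\}$ at a point $x_n \simeq \exp(-c/y_n^{m-1})$ for some $c>0$. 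Thus $(x_n)_n$ is super-exponentially small, so $\dimb(x_n)_n=0$ and the exit transversal contributes nothing to the dimension of $\bigcup_n\Gamma_n$.

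For the upper bound $\udimb\,\bigl(\bigcup_n\Gamma_n\bigr)\le 1+\udimb(y_n)_n$, I would cut each $\Gamma_n$ at a fixed intermediate level $y=\eta>0$ into an entry piece and an exit piece. Away from the singularity the Flow-Box Theorem applies, and the entry pieces can be straightened by a bi-Lipschitz map to the horizontal segments $[\eta,1]\times\{y_n\}$, whose union has box dimension $1+\udimb(y_n)_n$ by a standard product argument. Near the singularity, the explicit formula above shows that at level $y\in[\eta,1]$ the horizontal gap between $\Gamma_n$ and $\Gamma_{n+1}$ is $|y_n-y_{n+1}|$ times a factor decaying monotonically in $y$; a direct box-counting estimate then bounds the $\delta$-parallel neighborhood of the exit slab by a constant multiple of that of $\{(y_n)_n\}\times[\eta,1]$, and finite stability of $\udimb$ glues the two slabs together.

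For the lower bound, monotonicity suffices: $\bigcup_n\Gamma_n$ contains the entry slab, which is bi-Lipschitz equivalent to $\{(y_n)_n\}\times[\eta,1]$ and therefore has lower box dimension at least $1+\ldimb(y_n)_n$. Combining both inequalities and using the assumption that $\dimb(y_n)_n$ exists completes the argument.

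The main technical obstacle is the passage through the singularity, where the Flow-Box Theorem breaks down. Uniform control over the horizontal distance between consecutive trajectories $\Gamma_n,\Gamma_{n+1}$ must be extracted directly from the normal form, and, unlike the hyperbolic case where this distance scales like a power of $y$, here it involves $\exp(-c/y^{m-1})$-type factors. The delicate point is to verify that, while the compression in the singular box is extreme, it is integrable enough in $y$ to prevent any extra dimension from being produced there; once this is done, the exit slab is absorbed into a set of box dimension $0$ and the dimension of $\bigcup_n\Gamma_n$ is governed entirely by the entry slab, yielding the claimed equality.
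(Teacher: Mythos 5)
Your skeleton matches the paper's: reduce to a smooth normal form, integrate the model explicitly, get the lower bound from the Flow-Box Theorem and a product argument on the entry side, and control the piece near the singularity separately. But the step that carries the whole theorem --- the upper bound for the $\delta$-neighborhood of the trajectories near the singular corner --- is only asserted, and the justification you sketch for it does not work. You claim the horizontal gap between $\Gamma_n$ and $\Gamma_{n+1}$ at level $y$ is $|y_n-y_{n+1}|$ times a factor depending on $y$ alone; in fact it is $\exp\bigl(\tfrac{y^{1-m}}{\alpha(m-1)}\bigr)\Bigl[\exp\bigl(-\tfrac{y_n^{1-m}}{\alpha(m-1)}\bigr)-\exp\bigl(-\tfrac{y_{n+1}^{1-m}}{\alpha(m-1)}\bigr)\Bigr]$, so the $n$-dependence does not factor out as $|y_n-y_{n+1}|$. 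More importantly, monotone decay of the gaps does not by itself bound the measure of the $\delta$-neighborhood: once consecutive gaps drop below $2\delta$ the sausages merge and the merged region must be measured by its area, and since the compression grows toward the singularity, for a fixed $\delta$ strictly more trajectories have merged near the exit separatrix than at the entry transversal. So the ``constant multiple of the product set'' comparison is exactly the statement that needs proof; you correctly flag the ``integrability in $y$'' of the compression as the delicate point, but that is the entire content of the estimate, and it is the only real computation in the paper's proof. (Two smaller issues: ``discarding the lower-order perturbation'' should be replaced by a differential inequality such as $|h(x,y)|\le y^m$ on a small box, as the paper does; and if your cut is literally at the level $y=\eta$, the ``entry pieces'' reach $x$-coordinates $\exp\bigl(\tfrac{\eta^{1-m}-y_n^{1-m}}{\alpha(m-1)}\bigr)\to 0$, i.e.\ they pass arbitrarily close to the origin, so their union is not bi-Lipschitz to $[\eta,1]\times\{y_n\}_n$; the cut must be taken at $x=\eta$ for the Flow-Box argument to apply.)

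The paper fills the gap by splitting the family of trajectories (not the plane) at the $\delta$-dependent critical index $n_\delta$, the first $n$ with $y_n-y_{n+1}<2\delta$: the tail $\cup_{n<n_\delta}(\Gamma_n)_\delta$ has measure at most $C\delta(n_\delta-1)$ because each $\Gamma_n$ has uniformly bounded length, while the nucleus $\cup_{n\ge n_\delta}(\Gamma_n)_\delta$ is contained in a $\delta$-fattening of the region under the critical trajectory, of measure at most $y_{n_\delta}+\int_{y_{n_\delta}}^1 x(y)\,dy+D\delta$ with $x(y)=\exp\bigl(\tfrac{y^{1-m}-y_{n_\delta}^{1-m}}{\alpha(m-1)}\bigr)$; a short L'Hospital computation shows this integral is $o(y_{n_\delta})$, whence $|\Gamma_\delta|\le M(y_{n_\delta}+2\delta n_\delta)$ and $\udimb\Gamma\le 1+\dimb(y_n)_n$. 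To complete your version you must either supply this area estimate or replace it by an equally quantitative one --- for instance, showing that for each $y\in[\eta,1]$ the transition map $y_0\mapsto x(y;y_0)$ to the level $y$ is a uniform contraction for small $y_0$ (since $\partial x/\partial y_0=\alpha^{-1}y_0^{-m}x\to 0$), which lets you compare the slice neighborhoods with $|Y_\delta|$ up to the number of connected components. Either way, the estimate cannot be omitted: it is precisely where the semi-hyperbolic degeneracy enters.
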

    
    In Propositions~\ref{lm1} and \ref{lm2},  to fix the ideas, we first prove the weaker versions of Theorems~\ref{thm1} and \ref{thm2} for linear saddles:
    \begin{equation}\label{nf1}
        \begin{cases}
            \dot{x} = -x\\
            \dot{y} = \alpha y,\ \ 0 < \alpha \leq 1, 
        \end{cases}, 
    \end{equation}
    and the simplest semi-hyperbolic singularities
    \begin{equation}\label{nf2} \begin{cases}
    \dot{x} = -x\\
    \dot{y} = \alpha y^m,\ \ m\geq 2,\ \alpha > 0.
    \end{cases}\end{equation}

    \begin{proposition}\label{lm1}
        For a linear saddle \eqref{nf1}, 
        under notation of Theorem~\ref{thm1}, it holds that:
        \[ \dimb \left(\cup_{n\in\mathbb{N}} \Gamma_n\right) = 1 + \dimb (y_n)_n. \]
       \end{proposition}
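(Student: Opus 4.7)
The linear system \eqref{nf1} admits $yx^\alpha$ as a first integral, so the trajectory through $(1, y_n)$ is the explicit graph $\Gamma_n = \{(x, y_n x^{-\alpha}) : x_n \leq x \leq 1\}$ with $x_n = y_n^{1/\alpha}$, and a direct arc-length integration shows that the lengths $L_n$ are uniformly bounded by a constant $C$. My plan is to obtain the lower bound via the Flow-Box Theorem on a piece of $G := \bigcup_n \Gamma_n$ staying away from the singularity, and the upper bound by a direct estimate of $|G_\delta|$ using the explicit form.

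For the lower bound $\dimb G \geq 1 + \dimb (y_n)_n$, I would restrict to the region $\{x \geq 1/2\}$. The time-$t$ flow $\Phi(y, t) = (e^{-t}, y e^{\alpha t})$ has Jacobian determinant $e^{(\alpha-1)t}$, which lies in $[2^{-(1-\alpha)}, 1]$ on the compact rectangle $[0, y_1] \times [0, \log 2]$, so $\Phi$ restricted there is bi-Lipschitz onto its image. For $n$ large enough that $y_n \leq 2^{-\alpha}$, the image of $\{y_n\}_{n \geq n_0} \times [0, \log 2]$ is contained in $G \cap \{x \geq 1/2\}$. The bi-Lipschitz invariance of $\dimb$ together with the product formula $\dimb((y_n) \times I) = 1 + \dimb (y_n)_n$ (provable directly by a box-counting argument, exploiting that an interval has equal upper and lower box dimensions) and monotonicity of $\dimb$ then yield the desired inequality.

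For the upper bound, I would estimate $|G_\delta|$ directly. Introduce the critical index $N(\delta) := \max\{n : y_n - y_{n+1} > \delta\}$, which is well defined because the gaps decrease monotonically. Split $G = G' \cup G''$ with $G' = \bigcup_{n \leq N(\delta)} \Gamma_n$ and $G'' = \bigcup_{n > N(\delta)} \Gamma_n$. The separated part contributes $|(G')_\delta| \leq \sum_{n \leq N(\delta)} |(\Gamma_n)_\delta| \lesssim N(\delta)\,\delta$ by the length bound. For the bunched part, note that $G'' \subset R_N := \{(x,y) \in [0,1]^2 : y x^\alpha \leq y_{N+1}\}$, a region bounded by $\Gamma_{N+1}$ and the separatrices; the integral $\int_0^1 \min(1, y_{N+1} x^{-\alpha})\,dx$ gives $|R_N| \lesssim y_{N+1}$ for $\alpha < 1$ and $|R_N| \lesssim y_{N+1}\log(1/y_{N+1})$ for $\alpha = 1$, while a $\delta$-padding of $R_N$ adds only $O(\delta)$. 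Summing, $|G_\delta| \lesssim N(\delta)\,\delta + y_{N(\delta)+1}$ (up to a logarithm when $\alpha = 1$). This expression is exactly the standard estimate for $|(y_n)_\delta|$ of a monotone decreasing-gap sequence, so it behaves like $\delta^{1 - \dimb (y_n)_n}$, giving $\dimb G \leq 1 + \dimb (y_n)_n$.

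The main technical subtlety will be the resonant case $\alpha = 1$, where the area integral produces a logarithmic factor; this is harmless in the final limit $\log|G_\delta|/\log\delta$ but requires consistent bookkeeping throughout. A secondary concern is that for $\alpha < 1$ the minimum perpendicular distance between consecutive $\Gamma_n$ is attained near the exit transversal rather than at the entry, so tubes around ``entry-separated'' trajectories can overlap near the exit. This does not obstruct either bound: the upper bound uses only the sub-additive inequality $|(G')_\delta| \leq \sum_n |(\Gamma_n)_\delta|$, which is insensitive to overlap, and the lower bound works entirely on $\{x \geq 1/2\}$ where the trajectories are genuinely separated.
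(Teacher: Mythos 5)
Your proposal is correct and follows essentially the same route as the paper's proof: a tail/nucleus splitting at a critical index, with the tail controlled by the uniform length bound $|(\Gamma_n)_\delta|\lesssim\delta$, the nucleus by the area of the region under the critical trajectory (your $|R_N|$ is exactly the paper's $y_{n_\delta}+\int_{y_{n_\delta}}^1 x(y)\,dy$ computed in the other order of integration), the lower bound via the flow-box/product structure on $\{x\geq 1/2\}$, and the logarithmic factor at $\alpha=1$ absorbed in the $\ln/\ln$ limit just as the paper does with its $\kappa$-trick. The only cosmetic differences are the parametrization of the trajectories ($y(x)$ versus $x(y)$) and the gap threshold $\delta$ versus $2\delta$ in the critical index, neither of which affects the argument.
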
 
        \begin{proof}
        
            Since bi-Lipschitz transformations do not change the Minkowski dimensions, using a rescaling in $x$ and $y$ we may assume that $t_S = \{ x = 1 \}$ and $t_U = \{ y = 1\}$.
            The rescaled sequence on $t_S = \{ x = 1 \}$ satisfies the same assumptions as the original one, and we use the same notation $(y_n)_{n\in\mathbb{N}}$.
            We first present the standard computation of the Minkowski dimension of the sequence $(y_n)_{n\in\mathbb{N}}$ in one-dimensional ambient space $\mathbb{R}$.
            Let us denote $Y := \{ y_n \colon n\in\mathbb{N}\}$.
            For $\delta > 0$ small enough, there is a unique critical index $n_\delta$ such that $y_{n_\delta} - y_{n_\delta + 1} < 2\delta$ and $y_{n} - y_{n+1} \geq 2\delta$, for all $n < n_\delta$.
            We now divide the $\delta$-neighborhood $Y_\delta$ into two parts.
            The $\delta$-neighborhoods of points $y_1, y_2, ..., y_{n_\delta - 1}$ do not intersect, and we call their union the tail of $Y_\delta$ and denote it by $T_\delta$.
            On the other hand, the $\delta$-neighborhood of the remainder of the sequence is the interval $(-\delta, y_{n_\delta} + \delta)$, and it is refered to as the nucleus of $Y_\delta$ and denoted by $N_\delta$ (see e.g.\cite{tricot95}).
           Note that $N_\delta$ and $T_\delta$ are disjoint.
            The Lebesgue measure of $Y_\delta$ is now equal to:
            \[ |Y_\delta| = |N_\delta| + |T_\delta| = (y_{n_\delta} + 2\delta) + (n_\delta - 1)\cdot 2\delta = y_{n_\delta} + 2\delta n_\delta. \]
            Since, by our assumptions, $Y$ has  Minkowski dimension, by definition of Minkowski dimension it holds that
            \[ \dimb Y = \lim_{\delta \to 0} \left[ 1 - \frac{ \ln (y_{n_\delta} + 2\delta n_\delta)}{\ln \delta} \right]. \]
            
            Let $$\Gamma := \{\Gamma_n \colon n\in\mathbb{N}\}.
            $$
            
            Let us first consider the part of the set $\Gamma$ of trajectories in the region $\{ \frac{1}{2} \leq x \leq 1\}$.
            Since the Minkowski dimension of the Cartesian product is the sum of Minkowski dimensions, in this region the Flow-Box Theorem allows us to easily calculate the Minkowski dimension to be $1 + \dimb Y$.
            Due to monotonicity of $\ldimb$, we now have the lower bound
            $$1 + \dimb Y \leq \ldimb \Gamma.$$
            
            Therefore, to prove the proposition, it suffices to show that \begin{equation}\label{eq:d}\overline{\dim}_B \Gamma \leq 1 + \dimb Y.\end{equation} It can be shown that there exists a positive constant $C > 0$ such that, for any $n\in\mathbb{N}$, the Lebesgue measure of $(\Gamma_n)_\delta$ is bounded from above by $C\delta$, as $\delta \to 0$.
            This bound is uniform both in $\delta\in(0,\delta_0)$, $\delta_0>0$, and in $n\in\mathbb N$.
            Therefore, for a given $\delta > 0$, we bound the Lebesgue measure of the $\delta$-neighborhood of the union of trajectories $\cup_{ n<n_\delta}\Gamma_n$, arising from the tail of $Y_\delta$, from above by 
            \begin{equation}\label{eq:tl}\left|\cup_{n<n_\delta}(\Gamma_n)_\delta\right|\leq C\delta(n_\delta-1).\end{equation} On the other hand, the Lebesgue measure of the $\delta$-neighborhood of the union of trajectories $\cup_{n\geq n_\delta}\Gamma_n$, arising from the nucleus of $Y_\delta$, is bounded from above by 
            \begin{equation}\label{nucleusbound} \left|\cup_{n\geq n_\delta}(\Gamma_n)_\delta\right|\leq y_{n_\delta} + \int_{y_{n_\delta}}^1 x(y)dy + D\delta, \end{equation}
            where $D$ is a universal positive constant and $y \mapsto x(y)$ is a function whose graph is the curve $\Gamma_{n_\delta}$.
            For more details, see Figure \ref{fig:saddle}.

            \begin{figure}[h!]
                \centering
                \includegraphics[scale = 0.1]{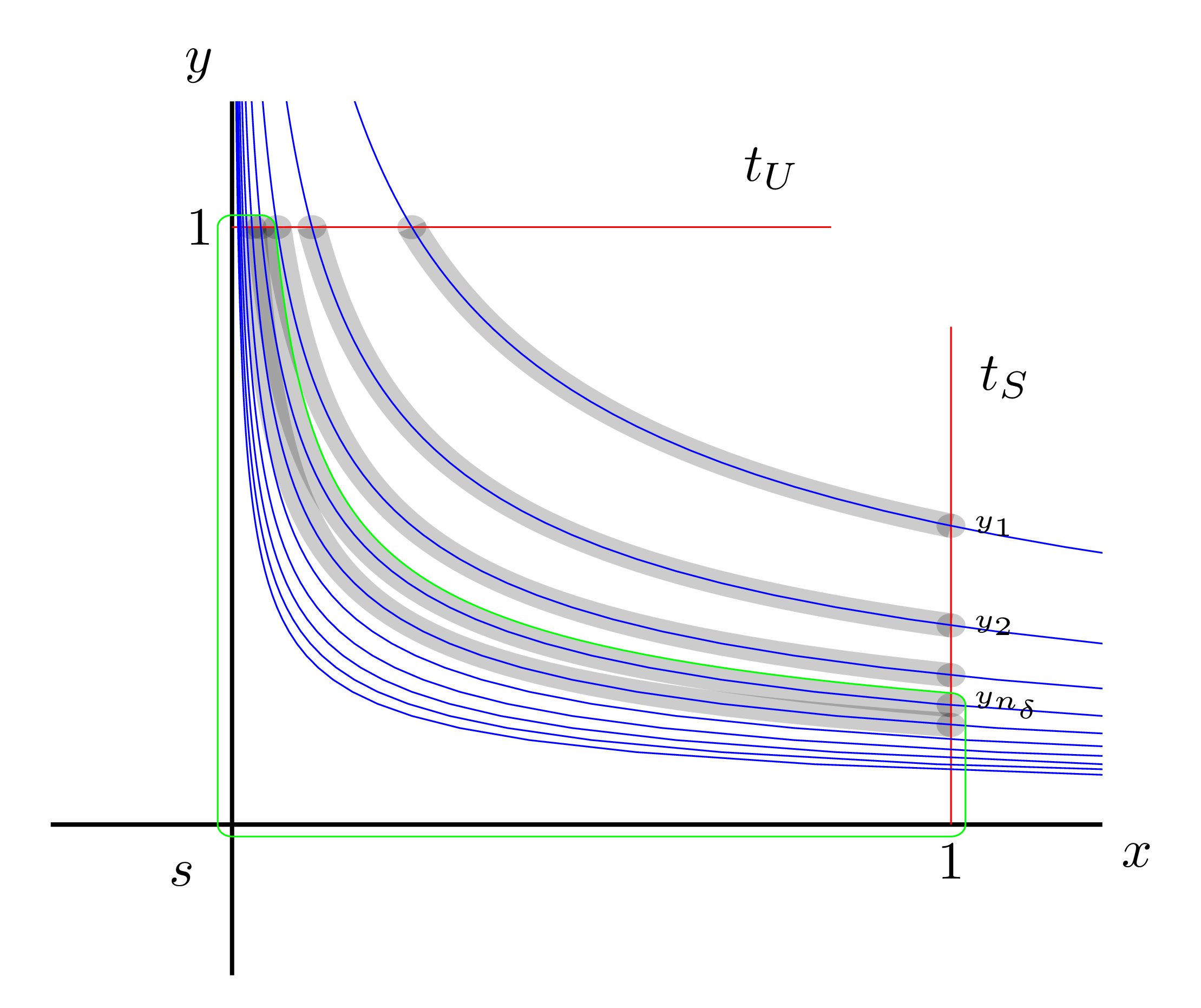}
                \caption{$\delta$-neighborhood of $\Gamma$}
                \label{fig:saddle}
            \end{figure}
            
\smallskip
            
            \noindent To prove inequality \eqref{eq:d}, we consider separately two cases.
            \begin{enumerate}
            \item{Case: $0 < \alpha < 1$.}\\
            By direct integration of the simple vector field (\ref{nf1}), we get
            \[ x(y) = \left( \frac{y_{n_\delta}}{y}\right)^{\frac{1}{\alpha}}, \]
            so 
            \begin{equation}{\label{nucleusintegral}}
                \int_{y_{n_\delta}}^1 x(y)dy = \frac{\alpha}{1-\alpha}( y_{n_\delta} - y_{n_\delta}^{\frac{1}{\alpha}}).
            \end{equation}
            Therefore, using \eqref{eq:tl}, \eqref{nucleusbound} and \eqref{nucleusintegral}, there exists a universal positive constant $M > 0$ such that
            \[ |\Gamma_\delta| \leq M(y_{n_\delta} + 2\delta n_\delta),\ \delta > 0. \]
            Finally,
           \begin{align*} \limsup_{\delta \to 0} \left[ 2 - \frac{\ln |\Gamma_\delta|}{\ln \delta} \right] &\leq \limsup_{\delta \to 0} \left[ 2 - \frac{ \ln M + \ln(y_{n_\delta} + 2\delta n_\delta)}{\ln \delta} \right]\\
            &= \limsup_{\delta \to 0} \left[ 2 - \frac{\ln(y_{n_\delta} + 2\delta n_\delta)}{\ln \delta} \right] = 1 + \dimb Y. \end{align*}
            Therefore, $\overline{\dim}_B \Gamma \leq 1 + \dimb Y$. 

            \item{Case: $\alpha=1$.}\\
            Note that formula (\ref{nucleusintegral}) cannot be used. However, similarly as in Case 1, by direct integration we get:
            $$
            x(y)=\frac{y_{n_\delta}}{y},\ \ \int_{y_{n_\delta}}^1 x(y)dy =y_{n_\delta}(-\ln y_{n_\delta}).
            $$
            Therefore, there exists $M>0$ such that:
            $|\Gamma_\delta|\leq M(y_{n_\delta}(-\ln y_{n_\delta})+2\delta n_\delta)$.
            Now, since both  $y_{n_\delta}\to 0$ and $\delta n_\delta\to 0$, as $\delta \to 0$, for every small $\kappa>0$ there exists $\delta_\kappa$, such that, for every $0<\delta<\delta_\kappa$, it holds that 
            $$
            y_{n_\delta}(-\ln y_{n_\delta})<y_{n_\delta}^{1-\kappa},\ 2\delta n_\delta<(2\delta n_\delta)^{1-\kappa},\ \delta< \delta_\kappa.
            $$
            Therefore,
            $$
            |\Gamma_\delta|\leq 2M \max\{y_{n_\delta},2\delta n_\delta\}^{1-\kappa},\ \delta<\delta_{\kappa}.
            $$
            Now, for every $\kappa>0$,
            \begin{align*} \limsup_{\delta \to 0} &\left[ 2 - \frac{\ln |\Gamma_\delta|}{\ln \delta} \right] \leq \limsup_{\delta \to 0} \left[ 2 - \frac{ \ln (2M)+(1-\kappa)\ln\max\{y_{n_\delta},2\delta n_\delta\}}{\ln \delta}\right]\\
            &\leq \limsup_{\delta \to 0} \left[ 2 - (1-\kappa)\frac{\ln(y_{n_\delta} + 2\delta n_\delta)}{\ln \delta} \right] = 1 +\kappa+(1-\kappa) \dimb Y. \end{align*}
            Letting $\kappa\to 0$, we get
            $$\limsup_{\delta \to 0} \left[ 2 - \frac{\ln |\Gamma_\delta|}{\ln \delta} \right]\leq 1+\dim_B Y.$$
            \end{enumerate}
             Finally, since
        \[ 1 + \dimb Y \leq \ldimb \Gamma \leq \udimb \Gamma \leq 1 + \dimb Y, \]
        we conclude that
        \[ \dimb \Gamma = 1 + \dimb Y. \]
\end{proof}

    \begin{proposition}\label{lm2}
        For a semi-hyperbolic singularity \eqref{nf2}, 
        under notation of Theorem~\ref{thm2}, it holds that:
        \[ \dimb \left(\cup_{n\in\mathbb{N}} \Gamma_n\right) = 1 + \dimb (y_n)_n. \]
\end{proposition}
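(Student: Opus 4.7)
The plan is to follow the template of the proof of Proposition~\ref{lm1}, adapting only the explicit integration of the flow and the resulting nucleus integral to the semi-hyperbolic normal form \eqref{nf2}. By the bi-Lipschitz invariance of the Minkowski dimension I rescale so that $t_S=\{x=1\}$ and $t_U=\{y=1\}$, and write $Y=\{y_n:n\in\mathbb{N}\}$ and $\Gamma=\cup_n\Gamma_n$. The lower bound $1+\dimb Y\le\ldimb\Gamma$ is obtained verbatim as in Proposition~\ref{lm1}: on the region $\{\frac{1}{2}\le x\le 1\}$, bounded away from the singularity, the Flow-Box Theorem identifies $\Gamma\cap\{\frac{1}{2}\le x\le 1\}$, up to a bi-Lipschitz map, with a product of $Y$ with an interval of Minkowski dimension $1+\dimb Y$; monotonicity of $\ldimb$ closes this direction.

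For the matching upper bound I reuse the decomposition of $Y_\delta$ into a tail (indices $n<n_\delta$) and a nucleus (indices $n\ge n_\delta$). The uniform estimate $|(\Gamma_n)_\delta|\le C\delta$ still gives the tail contribution \eqref{eq:tl}, and the geometric bound \eqref{nucleusbound}
\[\left|\cup_{n\ge n_\delta}(\Gamma_n)_\delta\right|\le y_{n_\delta}+\int_{y_{n_\delta}}^{1}x(y)\,dy+D\delta\]
remains valid, where $y\mapsto x(y)$ parametrises the outermost trajectory $\Gamma_{n_\delta}$. The only genuinely new computation is the replacement for \eqref{nucleusintegral}: separating variables in \eqref{nf2} gives
\[x(y)=\exp\!\left(\frac{y^{1-m}-y_{n_\delta}^{1-m}}{(m-1)\alpha}\right),\]
which equals $1$ at $y=y_{n_\delta}$ and, since $\frac{d}{dy}\ln x(y)=-y^{-m}/\alpha$, decays sharply from the lower endpoint with characteristic scale of order $y_{n_\delta}^{m}$.

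The main technical step is then the Laplace-type bound
\[\int_{y_{n_\delta}}^{1}x(y)\,dy=O(y_{n_\delta}^{m}).\]
I would split the integration interval at $2y_{n_\delta}$: on $[y_{n_\delta},2y_{n_\delta}]$, a linear lower bound for $(y_{n_\delta}^{1-m}-y^{1-m})/((m-1)\alpha)$ makes $x(y)$ dominated by a decaying exponential with total integral $O(y_{n_\delta}^{m})$; on $[2y_{n_\delta},1]$, the integrand is uniformly at most $\exp(-c/y_{n_\delta}^{m-1})$ for some $c>0$, which is much smaller than $y_{n_\delta}^{m}$. Since $m\ge 2$, this in particular gives $\int_{y_{n_\delta}}^{1}x(y)\,dy=O(y_{n_\delta})$, and hence $|\Gamma_\delta|\le M(y_{n_\delta}+2\delta n_\delta)$ for some universal $M>0$ and all small $\delta>0$. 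From here the concluding $\limsup$ computation is word-for-word the Case $0<\alpha<1$ argument in the proof of Proposition~\ref{lm1}, yielding $\udimb\Gamma\le 1+\dimb Y$ and therefore $\dimb\Gamma=1+\dimb Y$. I expect no analogue of the $\alpha=1$ sub-case of Proposition~\ref{lm1} to appear: the polynomial rate $\dot y=\alpha y^{m}$ with $m\ge2$ makes the nucleus integral polynomially small in $y_{n_\delta}$, so the logarithmic sub-case is bypassed entirely.
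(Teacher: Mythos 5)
Your proposal is correct and follows essentially the same route as the paper: the same rescaling, the same nucleus--tail decomposition with the bounds \eqref{eq:tl} and \eqref{nucleusbound}, the same explicit solution $x(y)=\exp\bigl(\frac{y^{1-m}-y_{n_\delta}^{1-m}}{\alpha(m-1)}\bigr)$, and the same reduction of the upper bound to showing that the nucleus integral is $O(y_{n_\delta})$. The only divergence is in how that integral is estimated: the paper applies L'Hospital's rule to get $\int_{y_{n_\delta}}^1 x(y)\,dy=o(y_{n_\delta})$, whereas your Laplace-type splitting at $2y_{n_\delta}$ yields the sharper and more elementary bound $O(y_{n_\delta}^m)$ (the mean value theorem gives $-\ln x(y)\ge (y-y_{n_\delta})/(\alpha(2y_{n_\delta})^m)$ on the first piece, and the second piece is exponentially small); since $m\ge 2$ this is $o(y_{n_\delta})$, so both arguments feed into the identical concluding $\limsup$ computation, and your observation that no analogue of the $\alpha=1$ sub-case arises matches the paper.
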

    \begin{proof}
        We use a similar nucleus-tail approach from the proof of Lemma \ref{lm1}.
        Again, rescaling $x$ and $y$, we assume $t_S = \{ x = 1 \}$ and $t_U = \{ y = 1 \}$.
        This changes the constant $\alpha$ in \eqref{nf2}, but $\alpha$ remains positive.
        Again we denote $Y := \{ y_n \colon n\in\mathbb{N} \}$ and $\Gamma = \{ \Gamma_n \colon n\in\mathbb{N}\}$.
        As in the proof of Lemma \ref{lm1}, $\underline{\dim}_B \Gamma \geq 1 + \dimb Y$.
        Again, there exist uniform positive constants $C$ and $D$ such that same bounds \eqref{eq:tl} and \eqref{nucleusbound} hold.
        Let us show that the integral in (\ref{nucleusbound}) satisfies
        \begin{equation}\label{smallintegral} \int_{y_{n_\delta}}^1 x(y)dy = o(y_{n_\delta}),\ \delta \to 0, \end{equation}
        where $x(y) = \exp\left( \frac{ y^{1-m} - y_{n_\delta}^{1-m}}{\alpha(m-1)} \right)$ is the function whose graph is the trajectory $\Gamma_{n_\delta}$ through $(1,y_{n_\delta})$.
        To prove this it suffices to show that the function 
        \[ y \mapsto \int_y^1 \exp\left( \frac{t^{1-m} - y^{1-m}}{\alpha(m-1)}\right)dt \]
        is $o(y)$, as $y \to 0$. Indeed, we have that
        \begin{align*} \lim_{y \to 0} \frac{ \int_y^1 \exp\left( \frac{t^{1-m} - y^{1-m}}{\alpha(m-1)}\right)dt }{y} &= \lim_{y \to 0} 
            \frac{ \int_y^1 \exp\left( \frac{t^{1-m}}{\alpha(m-1)}\right)dt}{y\exp\left( \frac{y^{1-m}}{\alpha(m-1)}\right)} = \\
             &= \lim_{y \to 0} \frac{ - \exp\left( \frac{y^{1-m}}{\alpha(m-1)}\right)}{(1-\frac{y^{1-m}}{\alpha})\exp\left( \frac{y^{1-m}}{\alpha(m-1)}\right)} = 0, \end{align*}
        where we used the L'Hospital's rule in the second step.
        Using (\ref{smallintegral}) as in Lemma \ref{lm1}, we get that $|\Gamma_\delta| \leq M(y_{n_\delta} + 2\delta n_\delta)$ for some positive constant $M$. Consequently, $\udimb \Gamma \leq 1 + \dimb Y$.
        \end{proof}

    \begin{proof}[Proof of Theorem \ref{thm1}]
       Consider an analytic vector field with a hyperbolic saddle, and let $t_S$ and $t_U$ be as in the statement of the theorem.
        Following \cite[Theorem 2.15]{dumllibart06}, near the hyperbolic saddle the field can be reduced to the following (smooth) orbital normal form:
        \begin{equation}\label{eq:nf} \begin{cases}
        \dot{x} = -x\\
        \dot{y} = \alpha y + h(x,y), \end{cases}
        \end{equation}
        where $\frac{1}{\alpha} \geq 1$ is the hyperbolicity ratio of the saddle and $h(x,y) = O(xy^2),\ (x,y)\to (0,0),$ is a $C^\infty$ function. Since a smooth local change of coordinates is bi-Lipschitz, it preserves the Minkowski dimension around $0$. Therefore, it suffices to compute Minkowski dimension of the saddle in the normal form \eqref{eq:nf}.
        
        For an arbitrary $\beta > 0$, we choose a small enough neighborhood of the saddle such that $|\alpha y + h(x,y)| \leq (1+\beta)y$.
        Now we choose new transversals $t_S'$ and $t_U'$ that intersect the stable and unstable manifold respectively in this neighborhood.
        Up to a rescaling, we may assume that $t'_S = \{ x = 1\}$ and $t'_U = \{ y = 1\}$.
        Due to the Flow-Box Theorem, the maximal Minkowski dimension of parts of $\Gamma = \cup_{n\in\mathbb{N}} \Gamma_n$ between $t_S$ and $t_S'$,  $t_U$ and $t_U'$, as well as of those around $t_U'$ and $t_S'$, is equal to $1 + \dimb Y$. Therefore,
        \begin{equation}\label{eq:a} \ldimb \Gamma\geq 1 + \dimb Y. \end{equation}
        To prove the theorem, it suffices to show that, for every $\beta>0$, \begin{equation}\label{eq:h}\overline{\dim}_B \Gamma' \leq \frac{2\beta + 1 + \dimb Y}{1+\beta},\end{equation} where $\Gamma'$ is the part of $\Gamma$ between $t_S'$ and $t_U'$.
        Evidently, $\overline{\dim}_B \Gamma'=\overline\dim_B\Gamma$. Passing to limit as $\beta\to 0$, we get:
        $$
        \overline{\dim}_B \Gamma\leq 1+\dim_B Y,
        $$
        which, along with \eqref{eq:a}, concludes the proof of the theorem.
        \medskip
        
    Let us now prove \eqref{eq:h}. For $(u,v)\in\Gamma_n$ we have that
        \[ - \ln u = \int_1^u -\frac{dx}{x} = \int_{y_n}^{v} \frac{dy}{\alpha y + h(x,y)} \geq \int_{y_n}^{v} \frac{dy}{(1+\beta)y}, \]
        that is, 
        \[ u \leq \left( \frac{y_n}{v} \right)^\frac{1}{1+\beta}. \]
       Similarly as in the proof of Proposition~\ref{lm1}, the Lebesgue measure of the $\delta$-neighborhood of trajectories $\Gamma_n$ arising from the nucleus is bounded above by
        \[ |\cup_{n\geq n_\delta}(\Gamma_n)_\delta|\leq y_{n_\delta} + \int_{y_{n_\delta}}^1 \left( \frac{y_{n_\delta}}{y}\right)^\frac{1}{1+\beta}dy + D\delta. \]
        The integral above is of order $O(y_{n_\delta}^\frac{1}{1+\beta})$, as $\delta \to 0$. Now we proceed similarly as in the proof of \emph{Case 2.} in Proposition~\ref{lm1}. There exists $M>0$ such that, for sufficiently small $\delta>0$,
        $$
        |\Gamma_\delta|\leq M (y_{n_\delta}^{\frac{1}{1+\beta}}+2\delta n_\delta).
        $$
        Due to the fact that, for every $\beta>0$, $2\delta n_\delta<(2\delta n_\delta)^{\frac{1}{1+\beta}}$ for sufficiently small $\delta<\delta_\beta$, we get 
        $$
        |\Gamma_\delta|\leq 2M\max\{y_{n_\delta},2\delta n_\delta\}^{\frac{1}{1+\beta}},\ \delta<\delta_\beta.
        $$
        Using exactly the same procedure as in the proof of Proposition~\ref{lm1}, \emph{Case 2.}, we finally get
        $$
\limsup_{\delta \to 0} \left[ 2 - \frac{\ln |\Gamma_\delta|}{\ln \delta} \right] \leq 2-\frac{1}{1+\beta}(1-\dim_B Y)=\frac{2\beta+1+\dim_B Y}{1+\beta}.
        $$
         \end{proof}

    \begin{proof}[Proof of Theorem \ref{thm2}]
        Due to \cite[Theorem 2.19]{dumllibart06} we can use the following orbital normal form near the semi-hyperbolic singularity
        \[ \begin{cases}
        \dot{x} = -x,\\
        \dot{y} = y^m + h(x,y),\ m\geq 2, \end{cases}\]
        where $h(x,y) = O(y^{2m-1})$ is a $C^\infty$ function. 
       
        Now, similarly as in the proof of Theorem \ref{thm1}, we find a sufficiently small neighborhood of the singularity where $| h(x,y) | \leq y^m$.
        Now, with this bound, we proceed similarly as in the proof of Proposition~\ref{lm2} to show that $\udimb \Gamma \leq 1 + \dimb Y$.
        The other inequality, $\underline{\dim}_B \Gamma\geq 1+\dim_B Y$, is deduced analogously as in Theorem~\ref{thm1}. 
    \end{proof}

    \begin{corollary}[Minkowski dimension of a monodromic polycycle]\label{cor1}
        Let $P$ be a monodromic $N$-polycycle of an analytic vector field, with hyperbolic saddles and semi-hyperbolic singularities as vertices, $N\in\mathbb N$. 
        Let $S$ be a spiral trajectory accumulating to $P$. Let $t_1, t_2, ...., t_N$ be transversals to $($all$)$ regular sides of the polycycle. By $Y_k,\, k\in\{1,2,...,N\}$, we denote the intersections of $S$ with transversals $t_k$ respectively.
        Then,
        \[ \dimb\, S = 1 + \max \left\{ \dimb\, Y_k \colon k\in\{1,2,...,N\} \right\}. \]

    \begin{proof}
        This Corollary is a direct consequence of Theorems \ref{thm1} and \ref{thm2} and the finite stability of the Minkowski dimension.
    \end{proof}
    \end{corollary}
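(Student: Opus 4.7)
The plan is to decompose the spiral $S$ into finitely many pieces, each contained either in a small neighbourhood of a singular vertex or in a flow-box along a regular side of $P$, compute the Minkowski dimension of each piece via Theorem~\ref{thm1}/\ref{thm2} or via the Flow-Box Theorem, and combine the results using the finite stability of $\dimb$.

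First I would pick, for each vertex $v_k$ of $P$, a small neighbourhood $U_k$ in which the orbital normal form used in Theorem~\ref{thm1} (saddle case) or Theorem~\ref{thm2} (semi-hyperbolic case) is valid, and fix inside $U_k$ an entry transversal $t_k^{\mathrm{in}}$ and an exit transversal $t_k^{\mathrm{out}}$ as in those statements. Outside $\bigcup_k U_k$, each regular side of $P$ is covered by a single flow-box $V_k$ joining $t_{k-1}^{\mathrm{out}}$ to $t_k^{\mathrm{in}}$, and I would arrange that the transversal $t_k$ from the statement lies inside $V_k$. Since $S$ accumulates on $P$, removing a compact initial arc of $S$ (of Minkowski dimension at most $1$, absorbed later by finite stability) leaves $S$ as a finite disjoint union of pieces $S_k^{\mathrm{sing}} := S \cap U_k$ and $S_k^{\mathrm{reg}} := S \cap V_k$, $k=1,\dots,N$.

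For each regular piece $S_k^{\mathrm{reg}}$, the Flow-Box Theorem furnishes a $C^\infty$ straightening diffeomorphism on $V_k$, which is bi-Lipschitz on a compact neighbourhood and sends orbits to horizontal lines. Under this diffeomorphism $S_k^{\mathrm{reg}}$ becomes the Cartesian product of an interval and a sequence bi-Lipschitz equivalent to $Y_k$, so the product formula for $\dimb$ yields $\dimb S_k^{\mathrm{reg}} = 1 + \dimb Y_k$. For each singular piece $S_k^{\mathrm{sing}}$, Theorem~\ref{thm1} or \ref{thm2} applied with entry transversal $t_k^{\mathrm{in}}$ gives $\dimb S_k^{\mathrm{sing}} = 1 + \dimb(S \cap t_k^{\mathrm{in}})$; the same flow-box diffeomorphism on $V_k$ provides a bi-Lipschitz bijection between $S \cap t_k^{\mathrm{in}}$ and $Y_k$, so the two sequences share Minkowski dimension. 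Finite stability then yields
\[ \dimb S \;=\; \max_{k}\bigl\{\dimb S_k^{\mathrm{sing}},\, \dimb S_k^{\mathrm{reg}}\bigr\} \;=\; 1 + \max_{k}\dimb Y_k. \]

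The work is almost entirely bookkeeping, and the main thing to check carefully is that the various sequences obtained on the auxiliary transversals $t_k^{\mathrm{in}}$ and $t_k^{\mathrm{out}}$ are bi-Lipschitz equivalent to the $Y_k$ supplied by the statement, so that all Minkowski dimensions line up. A minor but related point is verifying the hypothesis of Theorems~\ref{thm1}--\ref{thm2} that the entry sequence itself possesses a Minkowski dimension; this is automatic from the bi-Lipschitz invariance of $\dimb$ and the assumed existence of $\dimb Y_k$. No genuinely new estimate is needed beyond those already packaged inside Theorems~\ref{thm1} and~\ref{thm2}.
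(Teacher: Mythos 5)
Your argument is correct and is exactly the decomposition the paper intends: split $S$ into finitely many pieces near the vertices and along the regular sides, apply Theorems~\ref{thm1} and \ref{thm2} to the former and the Flow-Box Theorem to the latter, note that the sequences on the auxiliary transversals are carried to the $Y_k$ by regular (hence bi-Lipschitz) transition maps, and conclude by finite stability. The paper's proof is a one-line citation of precisely these ingredients, so you have simply written out the intended bookkeeping in full.
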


\section{Applications to hyperbolic saddle-loops}\label{sec3}
    The simplest polycycle configuration is a hyperbolic saddle-loop of an analytic vector field.
    A saddle-loop is an invariant set in which the unstable manifold of the saddle extends to its stable manifold (i.e. there exists a homoclinic connection of the saddle).
    
    Let $r>0$ be the hyperbolicity ratio of the saddle. It is well-known (see e.g. \cite[pg. 109]{roussarie98}) that the first return map $P$ on any transversal to the regular part of the loop (parametrized regularly by $x\in [0, T[$, where $x = 0$ corresponds to the point on the loop) satisfies:
    \begin{enumerate}
    \item (codimension $1$ case, $r \in \mathbb{R}^+\setminus\{ 1 \}$)
    \[ P(x) \, \sim Ax^r,\ A>0, \]
    \item (higher finite codimension cases, $r=1$)
    \[ P(x) = x + \delta(x),\]
    where
    \[ \delta(x) = \beta_1 x + \alpha_2x^2(-\ln x) + \beta_2x^2 + ... + \beta_{k-1}x^{k-1} + \alpha_kx^k(-\ln x) + O(x^k). \]
\end{enumerate}
The saddle loop is said \cite{roussarie98} to be of \emph{codimension $2k$} if $\delta(x) \sim \beta_kx^k$, as $x\to 0$, $\beta_k \neq 0$. It is said to be of \emph{codimension $2k+1$} if $\delta(x) \sim \alpha_{k+1}x^{k+1}(-\ln x)$, as $x\to 0$, with $\alpha_{k+1}\neq 0$, $k\geq 1$. We exclude here \emph{infinite codimension} cases when $P\equiv \mathrm{id}$, that is, when the loop is of \emph{center type}. In other words, in \emph{finite codimension} cases there is an accumulating spiral trajectory to the loop. In \cite{roussarie98} it is shown that the codimension of the saddle loop corresponds to its cyclicity in generic unfoldings.
\medskip

In the following Theorem~\ref{thm3}, we apply our results from Section~\ref{sec2} to give a correspondence between the codimension of the loop and the Minkowski content of its (any) spiral trajectory. The correspondence between the codimension of the saddle-loop and the Minkowski dimension of spiral trajectories is $2$--$1$.
    \begin{theorem}[Minkowski dimension of a saddle-loop]\label{thm3}
        The Minkowski dimension of a spiral trajectory $S$ in an analytic vector field that has a finite-codimension saddle-loop as its $\alpha/\omega$-limit set depends only on the codimension of the saddle-loop.
        More precisely, if $k\geq 1$ is the codimension of the saddle loop, then:
        \[ \dimb S = \begin{cases}
        2 - \frac{2}{k},& k \text{ even,}\\
        2 - \frac{2}{k+1},& k \text{ odd}. \end{cases} \]
\end{theorem}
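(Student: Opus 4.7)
The plan is to apply Corollary~\ref{cor1} to reduce the problem to a one-dimensional dimension computation. Since a saddle-loop is a monodromic $1$-polycycle with a single vertex (the saddle), Corollary~\ref{cor1} gives $\dim_B S = 1 + \dim_B Y$, where $Y = \{P^n(x_0)\}_{n\in\mathbb{N}}$ is the orbit of any small $x_0 > 0$ under the first-return map $P$ on a transversal to the regular side of the loop. The proof thus reduces to computing $\dim_B Y$ in each codimension and verifying that the result matches the stated formula.

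I would split the computation of $\dim_B Y$ into four cases according to the leading behavior of $P$ at the fixed point $0$. In codimension $1$ one has $P(x)\sim Ax^r$ with $r\neq 1$, so $\log x_n$ is essentially a geometric progression and the orbit converges super-geometrically; an elementary nucleus-tail estimate yields $\dim_B Y = 0$. In codimension $2$ one has $P(x)=(1+\beta_1)x+O(x^2)$ with $\beta_1\neq 0$, so the orbit converges geometrically and again $\dim_B Y = 0$. For codimension $2j$ with $j\geq 2$, where $P(x)=x+\beta_jx^j+\mathrm{h.o.t.}$, comparison of the iteration with the flow of $\dot x=\beta_jx^j$ yields $x_n\sim c\,n^{-1/(j-1)}$; then the classical identity $\dim_B\{n^{-p}\}=1/(p+1)$, which may be re-derived by exactly the nucleus-tail argument used in the proof of Proposition~\ref{lm1}, gives $\dim_B Y=(j-1)/j$. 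For codimension $2j+1$ with $j\geq 1$, where $P(x)=x+\alpha_{j+1}x^{j+1}(-\log x)+\mathrm{h.o.t.}$, the analogous comparison with the flow of $\dot x=\alpha_{j+1}x^{j+1}(-\log x)$ gives $x_n\sim c\,(n\log n)^{-1/j}$, and a direct nucleus-tail estimate shows that the logarithmic factor multiplies $|Y_\delta|$ only by a subpolynomial factor which is invisible to the limit defining $\dim_B$, so $\dim_B Y = j/(j+1)$.

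Assembling the four cases through Corollary~\ref{cor1}, one obtains $\dim_B S=1=2-2/(k+1)$ for $k=1$, $\dim_B S=1=2-2/k$ for $k=2$, $\dim_B S=1+(j-1)/j=2-2/(2j)$ for $k=2j$ even with $j\geq 2$, and $\dim_B S=1+j/(j+1)=2-2/(2j+2)$ for $k=2j+1$ odd with $j\geq 1$, matching the claimed formula in every case. The $2$-to-$1$ correspondence announced before the theorem then appears explicitly: codimensions $2j-1$ and $2j$ both yield $\dim_B S=(2j-1)/j$.

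I expect the main technical obstacle to be the odd-codimension case, namely the precise derivation of $x_n\sim c(n\log n)^{-1/j}$ from the parabolic iteration with logarithmic perturbation, and the verification that this logarithmic correction does not perturb $\dim_B Y$. The first point is handled by writing the flow of the approximating ODE as an implicit integral via the substitution $u=-\log y$, isolating the leading behavior of that integral, and inverting asymptotically; the second by repeating the nucleus-tail decomposition of Proposition~\ref{lm1} directly for the sequence $(n\log n)^{-1/j}$ and observing that the log factor enters $|Y_\delta|$ only to a power which does not affect the limit $\lim_{\delta\to 0}\ln|Y_\delta|/\ln\delta$.
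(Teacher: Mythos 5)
Your proposal is correct and follows essentially the same route as the paper: reduce to the Minkowski dimension of the orbit of the first-return map on a transversal via Corollary~\ref{cor1} (the paper phrases this as the Flow-Box Theorem plus Theorem~\ref{thm1} plus finite stability), then plug in the orbit dimension for each codimension. The only difference is that where you re-derive the one-dimensional orbit dimensions by hand (geometric/super-geometric convergence in codimensions $1$ and $2$, the asymptotics $x_n\sim cn^{-1/(j-1)}$ and $x_n\sim c(n\log n)^{-1/j}$ with nucleus--tail estimates in the higher codimensions), the paper simply cites these as known results (Lemma~1 and Theorem~1 of \cite{neveda07} for the hyperbolic and even cases, Theorem~2 of \cite{MarResZup12} for the odd case with the logarithmic term); your sketched derivations agree with those cited statements.
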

    \begin{proof}
        In codimension $1$ and $2$ cases,  the first return map $P$ is hyperbolic, so the Minkowski dimension of the set of points of intersection of any spiral with a regular transversal to the saddle-loop (i.e. of an orbit of $P$) is $0$ (see \cite[Lemma 1]{neveda07}). Now, due to the Flow-Box Theorem, Theorem~\ref{thm1} and the finite stability of the Minkowski dimension, the dimension of the spiral is \emph{trivial}, $\dim_B S=1$.

        For even codimensions $k>2$, the Minkowski dimension of the set of points of intersection of any spiral with a transversal to the saddle-loop (i.e. of an orbit of $P$) is $1-\frac{2}{k}$ due to \cite[Theorem 1]{neveda07}. Again, using the Flow-Box Theorem and Theorem \ref{thm1} we conclude that $\dim_B S=2 - \frac{2}{k}$.
        
        For odd codimensions $k>2$, the Minkowski dimension of the set of points of intersection of any spiral with a transversal to the saddle-loop is $1-\frac{2}{k+1}$ (see \cite[Theorem 2]{MarResZup12}). Therefore, $\dim_B S=2 - \frac{2}{k+1}.$
    \end{proof}

\section{Applications to hyperbolic $2$-cycles}\label{sec4}
    In this section we focus on analytic vector fields with a hyperbolic $2$-saddle polycycle $\Gamma_2$ with hyperbolicity ratios $r_1$ and $r_2$ (see Figure \ref{fig:2cycle}).
    We apply our fractal methods to the classical results from \cite{mourtada94} (and references therein) about the cyclicity of the $2$-cycle in the \emph{non-degenerate} and the \emph{degenerate} case.

    \begin{figure}[h!]
        \centering
        \includegraphics[scale = 0.6]{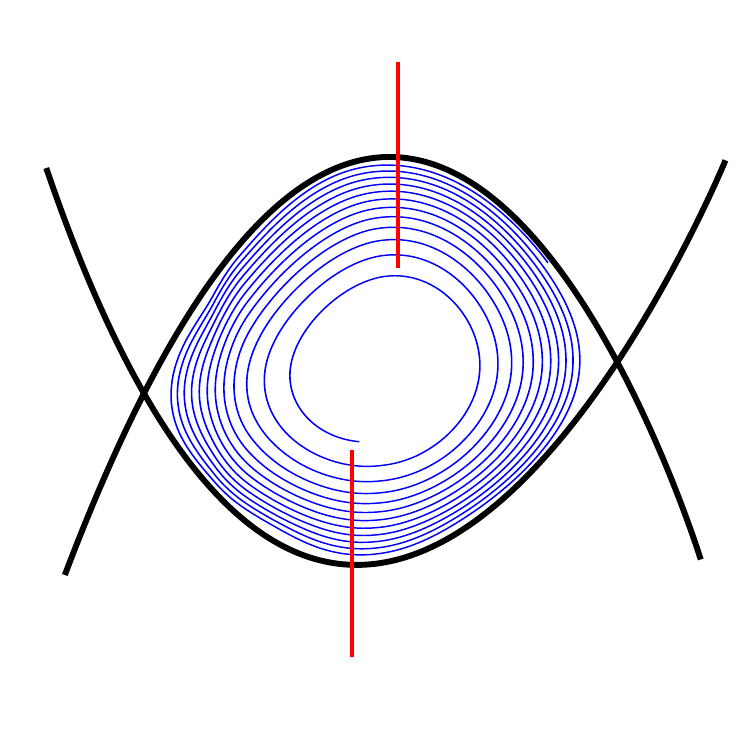}
        \caption{A non-trivial hyperbolic $2$-cycle}
        \label{fig:2cycle}
    \end{figure}

\subsection{\emph{Non-degenerate} $2$-cycles}
    \begin{theorem}[Cyclicity of non-degenerate $2$-cycles, \cite{mourtada94}]\label{teo0}
        If the conditions
        \begin{equation}\label{nondeg}
            r_1 \neq 1, \quad r_2 \neq 1, \quad r_1r_2 \neq 1
        \end{equation}
        hold, then the polycycle $\Gamma_2$ is of cyclicity less than or equal to $2$ in any $C^\infty$-unfolding. If, moreover, 
        $$(r_1-1)(r_2-1) < 0,$$
        there exists a two-parameter $C^\infty$-versal unfolding $(X_\lambda)$ in which $\Gamma_2$ is of cyclicity $2$. Otherwise, there exists a two-parameter $C^\infty$-versal unfolding in which $\Gamma_2$ is of cyclicity $1$.
    \end{theorem}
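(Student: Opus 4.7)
The plan is to reduce the cyclicity count to counting positive zeros of the displacement function of the first return map around $\Gamma_2$. Parametrize two transversals $\sigma_1,\sigma_2$ to the regular sides of $\Gamma_2$ by $x\in[0,\varepsilon)$, with $x=0$ on the polycycle. In a $C^\infty$-unfolding $(X_\lambda)$, I would invoke the family normal form for hyperbolic saddles (as in Roussarie \cite{roussarie98} and the Il'yashenko--Yakovenko theory) to write each Dulac transition map in the controlled form $D_i(x,\lambda)=x^{r_i(\lambda)}\bigl(c_i(\lambda)+\varphi_i(x,\lambda)\bigr)$, where $c_i(0)>0$ and the remainder $\varphi_i$ vanishes as $x\to 0^+$ uniformly in $\lambda$. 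Composing with the regular transitions $R_i$ (smooth diffeomorphisms between consecutive transversals), whose constant terms carry the two ``break'' parameters $\beta_1,\beta_2$ of a two-parameter versal unfolding (the signed distances between stable and unstable separatrices of the two saddles), one obtains $P_\lambda=R_2\circ D_2\circ R_1\circ D_1$. Limit cycles of $X_\lambda$ in a neighborhood of $\Gamma_2$ are then in bijection with positive zeros of $\delta_\lambda(x):=P_\lambda(x)-x$.

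For the upper bound $\mathrm{Cycl}(\Gamma_2,X_0)\le 2$, the key observation is that the leading behavior of $P_\lambda(x)$ near $x=0$ is a positive multiple of $x^{r_1(\lambda)r_2(\lambda)}$, and the non-resonance $r_1(0)r_2(0)\neq 1$ keeps this exponent bounded away from~$1$ throughout the family. Combined with $r_i(0)\neq 1$, which rules out the logarithmic terms $x^k(-\ln x)$ that enter the Dulac expansion in the resonant regime, a suitable monomial substitution (of the form $u=x^{r_1r_2-1}$ followed by $v=x^{r_1-1}$, say) converts $\delta_\lambda''$ into a function of definite sign on $(0,\varepsilon)$ for every $\lambda$ in a neighborhood of the origin. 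Two applications of Rolle's theorem then bound the number of isolated zeros of $\delta_\lambda$ by two, proving the claimed cyclicity bound.

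For the existence statements, I would analyze the effective displacement $\delta(x,\beta_1,\beta_2)\approx D_2\bigl(D_1(x)+\beta_1\bigr)+\beta_2-x$ on the canonical two-parameter versal unfolding and tune $\beta_1,\beta_2$ to realize the maximal number of simple zeros. When $(r_1-1)(r_2-1)<0$, one Dulac map is contracting and the other expanding, and one may first choose $\beta_1$ so that $\delta$ acquires a tangential double zero, then perturb $\beta_2$ to split it into two simple zeros, realizing cyclicity~$2$. When $(r_1-1)(r_2-1)>0$, both Dulac maps act in the same sense, the sign of $\delta'_\lambda$ is essentially monotone as a function of a single effective combination of the parameters, and at most one simple zero can be produced at a time, forcing cyclicity~$1$. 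The main obstacle throughout is obtaining the family Dulac expansion with sufficiently uniform remainder estimates: the factor $x^{r_i(\lambda)}$ is not $C^\infty$ in $(x,\lambda)$ at $x=0$, and $x$-derivatives generate logarithmic weights that must be controlled uniformly in $\lambda$. Once the non-resonant normal form and its two first $x$-derivatives are tamed, the Rolle--Khovanski\u{\i} counting and the construction of the extremal unfoldings follow a standard pattern in the bifurcation theory of polycycles.
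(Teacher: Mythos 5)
A preliminary remark: the paper itself offers no proof of Theorem~\ref{teo0} --- it is imported verbatim from Mourtada \cite{mourtada94} as input for the fractal results of Section~\ref{sec4} --- so there is no internal argument to measure yours against. Your sketch does follow the strategy of the classical proof (Dulac corner maps composed with regular transitions carrying the two breaking parameters, the displacement function, a Rolle/derivation count for the upper bound, and a tuning of the breaking parameters for the versal unfoldings), so the route is the right one in outline.

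As a proof, however, it has two genuine gaps. First, the pivotal claim of the upper bound --- that a monomial substitution renders $\delta_\lambda''$ of definite sign on $(0,\varepsilon)$ uniformly in $\lambda$ --- is only justified on the stratum where both connections persist, where indeed $P_\lambda(x)$ behaves like $C\,x^{r_1(\lambda)r_2(\lambda)}$ with exponent bounded away from $1$. Once a connection is broken ($\beta_i\neq 0$), the inner composition $R_1\circ D_1$ lands near a nonzero constant, the leading singular exponent of $P_\lambda$ changes stratum by stratum (it becomes governed by a single $r_i$, on a $\lambda$-dependent subdomain of the transversal), and no fixed substitution uniformizes the second derivative across all strata. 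This is precisely why Mourtada works in a class of almost-regular maps $x\mapsto x^{r}\bigl(A+\phi(x)\bigr)$ that is stable under derivation and division and runs the derivation--division algorithm, rather than a bare two-fold Rolle argument; note also that the bound ``cyclicity $\le 2$'' already holds under the single hypothesis $r_1r_2\neq 1$ (compare Theorem~\ref{teo1} for the family $\mathcal C_1$), so the extra hypotheses $r_1\neq1$, $r_2\neq1$ are doing their work elsewhere, namely in the two-parameter (rather than three-parameter) versality and in the dichotomy of the realized cyclicity. Second, that dichotomy --- cyclicity $2$ exactly when $(r_1-1)(r_2-1)<0$, and only $1$ otherwise --- is asserted through a heuristic about one contracting and one expanding corner; to turn it into a proof you must exhibit the two-parameter family, show the restricted displacement map acquires a double zero that splits into two simple ones in the mixed case, and prove that no choice of $(\beta_1,\beta_2)$ yields two zeros in the same-side case. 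Since this is an external classical theorem, the appropriate resolution here is to cite \cite{mourtada94} (and \cite{roussarie98}) rather than to reprove it.
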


    We now state our fractal result for non-degenerate $2$-cycles from Theorem \ref{teo0}.
    Note that, since $r_1r_2\neq 1$, the $2$-cycle of Theorem~\ref{thm5} is not of \emph{center-type} ($P\neq\mathrm{id}$), but has accumulating spiral trajectories (\emph{focus-type}).

    \begin{theorem}[Minkowski dimension of a non-degenerate $2$-polycycle]\label{thm5}
       Let $\Gamma_2$ be a monodromic $2$-polycycle of an analytic vector field, non-degenerate in the sense of \eqref{nondeg}. The Minkowski dimension of any spiral trajectory accumulating on $\Gamma_2$ is \emph{trivial}:
       $$\dim_B S=1.$$
    \end{theorem}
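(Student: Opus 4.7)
The plan is to reduce the problem to a one-dimensional computation via Corollary~\ref{cor1}, and then to identify the relevant sequences of intersection points as orbits of a first-return map whose leading asymptotics exhibit hyperbolic behavior in the sense of \cite{neveda07}.

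First, I would fix transversals $t_1, t_2$ to the two regular sides of $\Gamma_2$, parametrised so that $0$ corresponds to the intersection with $\Gamma_2$, and let $Y_k = S\cap t_k$ for $k=1,2$. Since both vertices of $\Gamma_2$ are hyperbolic saddles, Corollary~\ref{cor1} gives
\[
\dim_B S \;=\; 1 + \max\{\dim_B Y_1,\ \dim_B Y_2\},
\]
so the task reduces to proving $\dim_B Y_1 = \dim_B Y_2 = 0$.

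Next, I would observe that $Y_k$ is a (forward or backward) orbit of the first-return map $P_k$ on $t_k$, and that $P_k$ is a composition of the two Dulac maps through the saddles with analytic regular transitions between them. In the non-resonant case $r_1 \neq 1$, $r_2 \neq 1$, each Dulac map has a pure-power leading term, $D_j(x) \sim A_j x^{r_j}$ as $x\to 0^+$ with $A_j>0$ (standard Dulac theory, see \cite{roussarie98}), and since the regular transitions are analytic diffeomorphisms fixing $0$ they do not affect the leading asymptotics. Composing therefore yields
\[
P_k(x) \;\sim\; A\, x^{r_1 r_2}, \qquad x\to 0^+,
\]
for some $A>0$. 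The remaining non-degeneracy assumption $r_1 r_2 \neq 1$ in \eqref{nondeg} then makes the fixed point $0$ of $P_k$ (or of $P_k^{-1}$, depending on whether $r_1 r_2 > 1$ or $<1$) \emph{hyperbolic} in exactly the sense used in the codimension-$1$ step of the proof of Theorem~\ref{thm3}: the orbit $Y_k$ converges to $0$ at least geometrically fast.

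Finally, \cite[Lemma~1]{neveda07} — applied exactly as in the codimension-$1$ case of Theorem~\ref{thm3} — yields $\dim_B Y_k = 0$ for $k=1,2$, and the displayed identity above gives $\dim_B S = 1$. The only real technical ingredient is the leading asymptotics of the composite Dulac map; the main conceptual point is that, once the finite-stability reduction of Corollary~\ref{cor1} is in place, the non-degeneracy conditions \eqref{nondeg} are precisely what prevents either individual saddle or their composition from producing an identity-tangent first-return map that would lift $\dim_B Y_k$ above $0$.
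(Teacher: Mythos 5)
Your proposal is correct and follows essentially the same route as the paper: reduce via Corollary~\ref{cor1} to the dimension of the intersection sequences on transversals, note that the first-return maps, as compositions of Dulac corner maps and regular transitions, behave like $x^{r_1r_2}$ with $r_1r_2\neq 1$ (hence hyperbolic), and apply \cite[Lemma 1]{neveda07} to get dimension $0$ on the transversals. The only difference is cosmetic — you spell out the Dulac-map asymptotics and the role of each condition in \eqref{nondeg} slightly more explicitly than the paper does.
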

        \begin{proof}
            Let $t_1$ and $t_2$ be any regularly parametrized transversals to heteroclinic connections of $\Gamma_2$ not intersecting the saddles.
            The first return maps $P_i:t_i \to t_i,\ i\in\{1,2\}$, as compositions of regular diffeomorphisms and corner maps of the saddles, satisfy
            \[ P_i(x) \simeq x^{r_1r_2},\, x \to 0. \]
            Due to \cite[Lemma 1]{neveda07}, the Minkowski dimension of its orbit (i.e. of the intersection of the spiral with $t_1$ and $t_2$) is $0$.
            By Corollary \ref{cor1}, the Minkowski dimension of the entire spiral is $1$.
        \end{proof}

\subsection{\emph{Degenerate} $2$-cycles}
    In \cite{mourtada94}, Mourtada distinguishes between three families of \emph{degenerate} $2$-cycles (when some of non-degeneracy conditions $r_1\neq 1,\, r_2\neq 1$ or $r_1r_2 \neq 1$ do not hold):
\begin{itemize}
    \item $\mathcal{C}_1 = \{ \Gamma_2 \colon r_1r_2 \neq 1 \}$,
    \item $\mathcal{C}_2 = \{ \Gamma_2 \colon r_1r_2 = 1,\, r_1 \not\in \mathbb{Q} \}$,
    \item $\mathcal{C}_3 = \{ \Gamma_2 \colon r_1r_2 = 1,\, r_1 \in \mathbb{Q} \}$.
\end{itemize}
In the remainder of this paper we focus only on fractal analysis on families $\mathcal{C}_1$ and $\mathcal{C}_2$ and comparison with their known cyclicities. The cyclicity of polycycles in $\mathcal{C}_3$ is more complicated due to the presence of independent \emph{Ecalle-Roussarie compensators} from two resonant saddles (see \cite{roussarie98}).
    
\subsubsection{Family $\mathcal C_1$}
    \begin{theorem}[Cyclicity of $\mathcal C_1$, Theorem 1 in \cite{mourtada94}]\label{teo1}
        Let $\Gamma_2$ be a $2$-cycle belonging to $\mathcal{C}_1$ and tangent to a planar vector field $X_0$.
        Then $\Gamma_2$ is of cyclicity less than or equal to $2$ in every $C^\infty$ family $(X_\lambda)$ unfolding $X_0$.
        Furthermore, there exists a three-parameter $C^\infty$-versal unfolding $(X_\lambda)$ in which $\Gamma_2$ is of cyclicity $2$.
    \end{theorem}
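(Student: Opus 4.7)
The plan is to analyze the first-return map $P_\lambda$ of the polycycle on a transversal $t$ to one of its regular heteroclinic connections, and to bound the number of its fixed points uniformly for $\lambda$ in a neighborhood of the origin in parameter space. First, I would write
\[ P_\lambda = \rho_2^\lambda \circ D_2^\lambda \circ \rho_1^\lambda \circ D_1^\lambda, \]
where $D_i^\lambda$ are the Dulac (corner) maps through the hyperbolic saddles with hyperbolicity ratios $r_i(\lambda)$ depending continuously on $\lambda$, and $\rho_i^\lambda$ are regular transit maps along the two connections. In an unfolding, the $\rho_i^\lambda$ carry two connection-breaking parameters $\alpha_1(\lambda),\alpha_2(\lambda)$ that translate the transit maps off their heteroclinic values, which gives us the natural moduli on which the displacement function depends.

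For the cyclicity bound, I would study the displacement $V_\lambda = P_\lambda - \mathrm{id}$ on $t$, whose isolated zeros correspond to limit cycles. The key input is the semi-regular asymptotic expansion of Dulac maps for hyperbolic (possibly resonant) saddles, developed in the quasi-analytic classes of Il'yashenko--Yakovenko and exploited by Mourtada. Under the defining condition of $\mathcal C_1$, namely $r_1 r_2 \ne 1$, the composition $D_2^\lambda \circ D_1^\lambda$ has leading term $C(\lambda)\, x^{r_1 r_2}$ (with possible logarithmic corrections when some $r_i=1$), which is transverse to the identity. Hence $V_\lambda(x)$ is dominated by a single non-identity monomial $C x^{r_1 r_2}$ (or by $-x$, according as $r_1r_2>1$ or $<1$), plus an affine contribution coming from $\alpha_1,\alpha_2$. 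A Rolle / derivation argument applied successively to $V_\lambda$, together with the sign structure of its principal part, then yields at most two zeros of $V_\lambda$ on a sufficiently small half-neighborhood of $0$, uniformly in $\lambda$.

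For the sharpness statement, I would exhibit an explicit three-parameter $C^\infty$ unfolding with parameters $(\alpha_1,\alpha_2,\mu)$, where $\alpha_1,\alpha_2$ break the two connections and $\mu$ perturbs the eigenvalue ratios (for instance along the curve $r_1(\lambda) r_2(\lambda) = (r_1r_2)_0 + \mu$). Choosing first a saddle-loop bifurcation at one saddle produces a first hyperbolic limit cycle; then fine-tuning the second breaking parameter together with $\mu$ to create an opposite-sign extremum of $V_\lambda$ produces a second limit cycle. Versality would follow from checking that $(\alpha_1,\alpha_2,\mu)$ act linearly independently on the leading coefficients of the expansion of $V_\lambda$.

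The main obstacle is the uniformity of the Dulac expansions as $\lambda \to 0$, especially across the (sub)stratum where some $r_i(\lambda)$ passes through $1$: the remainders must be $o(x^{r_1r_2})$ uniformly in $\lambda$, which is the technical heart of Mourtada's argument and requires working in the framework of $\alpha$-flat functions rather than classical Taylor-type asymptotics. The alternative, shorter route is of course to simply invoke \cite{mourtada94} directly, but the substance of any self-contained proof lies precisely in this uniform control.
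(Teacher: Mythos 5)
The paper does not prove this statement: it is quoted verbatim as Theorem~1 of \cite{mourtada94} and used as a black box (the paper's own contribution for the family $\mathcal{C}_1$ is Theorem~\ref{thm6}, which only computes the Minkowski dimension of the spiral trajectories). So the ``alternative, shorter route'' you mention at the end --- invoking \cite{mourtada94} directly --- is exactly what the paper does, and for the purposes of this paper that citation is the intended justification.

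Judged as a self-contained argument, your sketch correctly identifies Mourtada's overall strategy (decompose $P_\lambda$ into Dulac corner maps and regular transitions, bound the zeros of the displacement $V_\lambda = P_\lambda - \mathrm{id}$ by a derivation--division/Rolle scheme, and build a versal unfolding from the two connection-breaking parameters plus one parameter unfolding the hyperbolicity ratios), but it leaves the substance unproved. Concretely: (i) by definition of $\mathcal{C}_1$ one has $r_1 r_2 \neq 1$ but the cycle is degenerate, so exactly one saddle has $r_i = 1$; its Dulac map is not a clean power $x^{r_i}$ but carries compensator/logarithmic corrections, so the implicit Chebyshev-type count on a system like $\{1, x, x^{r_1 r_2}\}$ does not directly apply, and the bound ``at most two zeros'' is asserted rather than derived --- this resonance is also precisely why three parameters (not two, as in the non-degenerate case of Theorem~\ref{teo0}) are needed for versality; (ii) the uniform control of remainders as $\lambda \to 0$ (Mourtada's framework of $(I,D)$-functions and $\alpha$-flatness), which you correctly flag as the technical heart, is acknowledged but not supplied; (iii) the versality claim requires more than linear independence of leading coefficients: one must actually realize two limit cycles and show no unfolding produces more, which again rests on the full uniform expansion. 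None of this affects the present paper, which only needs the statement as an external input.
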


    Again, same as in the case of non-degenerate $2$-cycles in Theorem~\ref{thm5}, it is clear here (by form of $P\neq\mathrm{id}$) that $\Gamma_2\in\mathcal C_1$ is not of center-type, but has spiral trajectories.

    \begin{theorem}[Minkowski dimension of $\mathcal C_1$]\label{thm6}
        Let a degenerate $2$-cycle $\Gamma_2$  of an analytic vector field belong to the family $\mathcal{C}_1$.
        The Minkowski dimension of any spiral trajectory $S$ accumulating on $\Gamma_2$ is \emph{trivial}, $\dim_B S=1$.
    \end{theorem}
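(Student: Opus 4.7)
The plan is to follow the same pattern as the proof of Theorem~\ref{thm5}. I will fix regularly parametrized transversals $t_1$ and $t_2$ to the two regular heteroclinic connections of $\Gamma_2$, away from the saddles, with $x=0$ corresponding to the polycycle. By Corollary~\ref{cor1}, the Minkowski dimension of $S$ equals $1 + \max\{\dim_B Y_1, \dim_B Y_2\}$, where $Y_i = S \cap t_i$ is an orbit of the first return map $P_i : t_i \to t_i$. Hence it suffices to show that $0$ is a hyperbolic fixed point of each $P_i$, so that $\dim_B Y_i = 0$.

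Writing $P_i$ as the composition of the two Dulac (corner) maps $D_1,D_2$ of the saddles with the regular diffeomorphisms $\psi_1,\psi_2$ along the regular sides of $\Gamma_2$, the key step is to establish that
\[
P_i(x) \simeq x^{r_1 r_2}, \quad x \to 0^+,
\]
even in the degenerate case where exactly one of $r_1, r_2$ equals $1$. For a ratio $r_j \neq 1$ it is classical that $D_j(x) = c\, x^{r_j}(1 + o(1))$, hence $D_j(x) \simeq x^{r_j}$. For a resonant ratio $r_j = 1$, despite potential Ecalle-Roussarie compensator terms in the Dulac expansion, one checks from an orbital normal form of a resonant analytic saddle that $D_j(x) = cx + o(x)$ with $c > 0$, so again $D_j(x) \simeq x = x^{r_j}$. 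Composing with $\psi_k(x) \simeq x$ then yields the claimed asymptotic for $P_i$. Since $\Gamma_2 \in \mathcal{C}_1$, we have $r_1 r_2 \neq 1$, so $P_i$ is a hyperbolic contraction or expansion at $0$, and \cite[Lemma~1]{neveda07} gives $\dim_B Y_i = 0$. Corollary~\ref{cor1} then yields $\dim_B S = 1$.

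The main obstacle is the verification of $D_j(x) \simeq x$ at a resonant saddle ($r_j = 1$), since the Dulac expansion can carry logarithmic compensators that \emph{a priori} might distort the bilateral bound $mx \leq D_j(x) \leq Mx$. To rule this out, I would appeal to the Il'yashenko-Yakovenko structure of the Dulac series at an analytic resonant saddle: the leading term is linear in $x$ with a positive coefficient, and all correction terms are of the form $x^k(-\ln x)^\ell$ with integer $k \geq 2$ and $\ell \geq 0$, so $x^k(-\ln x)^\ell = o(x)$ as $x\to 0^+$. Once this is settled, the composition argument and the appeal to \cite[Lemma~1]{neveda07} are routine.
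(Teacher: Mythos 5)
Your proposal is correct and follows essentially the same route as the paper, which simply declares the proof ``analogous to the proof of Theorem~\ref{thm5}'': fix transversals, show $P_i(x)\simeq x^{r_1r_2}$ with $r_1r_2\neq 1$, invoke \cite[Lemma 1]{neveda07} for $\dim_B Y_i=0$, and conclude by Corollary~\ref{cor1}. The one point you elaborate beyond the paper --- that the corner map of a resonant saddle ($r_j=1$) still satisfies $D_j(x)\simeq x$ because the compensator corrections $x^k(-\ln x)^\ell$, $k\geq 2$, are $o(x)$ --- is exactly the detail the word ``analogous'' glosses over, and your justification of it is sound.
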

        \begin{proof}
            The proof is analogous to the proof of Theorem \ref{thm5}. 
        \end{proof}

\subsubsection{Family $\mathcal C_2$}

First note that here, since $r_1r_2=1$, an extra assumption of \emph{non-trivial} polycycles is requested in Theorems \ref{mrt3} and \ref{thm4} to exclude identity first return maps, that is, $2$-cycles that are of center type. Therefore, we consider only monodromic cases of polycycles with spiraling trajectories.
\medskip

    Let $\Gamma_2\in\mathcal C_2$ be a $2$-cycle of an analytic vector field. The following analysis of the first return maps on  transversals to both heteroclinic connections is due to Mourtada \cite{mourtada94}. Since the ratios of hyperbolicity $r_1$ and $r_2$ of the saddles are irrational, the saddles are analytically linearizable, and there are $C^\infty$ transversals $\sigma_i, \tau_i$ near the saddles such that the associated corner Dulac maps $D_i : \sigma_i \to \tau_i$ are given by
    \[ y_i = D_i(x_i) = x_i^{r_i},\, i = 1,2. \]
    For more details see \cite[pg. 78]{mourtada94}.
    On the other hand, regular transition maps $R_1: \tau_2\to \sigma_1$ and $R_2 : \tau_1 \to \sigma_2$ are given by
    \[ x_1 = R_1(y_2) = \beta_{2,1}y_2\left[ 1 + \alpha_1 y_2^{k_1-1} + o(y_2^{k_1-1})\right]\]
    and
    \[ x_2 = R_2(y_1) = \beta_{1,2}y_1\left[ 1 + \alpha_2 y_1^{k_2-1} + o(y_1^{k_2-1}) \right] \]
    where $\beta_{1,2}, \beta_{2,1} > 0$ and $2 \leq k_i \in \mathbb{N}\cup \{ \infty \}$, and where $k_i = \infty$ implies $\alpha_i = 0$.
    The first return maps $P_1 = R_1 \circ D_2 \circ R_2 \circ D_1 : \sigma_1 \to \sigma_1$ and $P_2 = R_2 \circ D_1 \circ R_1 \circ D_2 : \sigma_2 \to \sigma_2$ are then given by:
   \begin{align}\label{p1} P_1(x_1) = \beta_{2,1}\beta_{1,2}^{r_2}x_1\Big[ 1 +&\left(\alpha_1\beta_{1,2}^{r_2(k_1-1)}x_1^{k_1-1} +o(x_1^{k_1-1})\right)+\nonumber\\ 
   &\qquad\qquad +\left(r_2\alpha_2x_1^{r_1(k_2-1)} + o(x_1^{r_1(k_2-1)})\right) \Big], 
   \end{align}
    and
   \begin{align}\label{p2}P_2(x_2) = \beta_{1,2}\beta_{2,1}^{r_1}x_2\Big[ 1 + &\left(\alpha_2\beta_{2,1}^{r_1(k_2-1)}x_2^{k_2-1} +  o(x_2^{k_2-1})\right)+\nonumber\\
   &\qquad\qquad +\left(r_1\alpha_1x_2^{r_2(k_1-1)}  + o(x_2^{r_2(k_1-1)}) \right)\Big].
   \end{align}

    Notice that $P_1$ is hyperbolic/tangent to identity if and only if $P_2$ is hyperbolic/tangent to identity.
    Indeed,
    \[ \beta_{1,2}\beta_{2,1}^{r_1} = \beta_{1,2}^{r_1r_2}\beta_{2,1}^{r_1} = \left( \beta_{2,1}\beta_{1,2}^{r_2}\right)^{r_1} .\]
    Moreover, exactly one of the inequalities $k_1-1 < r_1(k_2-1)$ or $k_2-1 < r_2(k_1-1)$ holds.
    On the contrary, if we assume that both hold, we get
    \[ k_1 - 1 < r_1(k_2-1) < r_1r_2(k_1-1) = k_1-1, \]
    which is obviously a contradiction. On the other hand, if we assume that neither one holds, we have
    \[ k_1-1 \geq r_1(k_2-1) \geq r_1r_2(k_1-1) = k_1-1. \]
    which would imply $k_1-1 = r_1(k_2-1)$ and $k_2-1 = r_2(k_1-1)$. As a consequence, $r_1,\, r_2\in\mathbb Q$, which is a contradiction.
\smallskip

     Moreover, in the case $\beta_{1,2}\beta_{2,1}^{r_1}=1$, $|\alpha_1| +  |\alpha_2| \neq 0$. Otherwise $P_1=P_2=\mathrm{id}$, which is the trivial case that is not considered here. This is a consequence of the \emph{quasi-analyticity} of the first return maps around hyperbolic saddle polycycles in analytic planar vector fields \cite{ilya91}, that states that the Taylor map that associates to a Dulac germ its Dulac asymptotic expansion is injective (i.e., trivial expansion implies the trivial germ).

    \bigskip
    
    \begin{theorem}[Cyclicity in $\mathcal C_2$, \cite{mourtada94}, p.\,83]\label{mrt3}
        Let $X_0$ be an analytic vector field with a non-trivial $($in the sense that the first return map is not equal to the identity$)$ $2$-cycle $\Gamma_2\in\mathcal C_2$ tangent to $X_0$.
        In the notation as above, 
        \begin{enumerate}
        \item If $\beta_{1,2}\beta_{2,1}^{r_1} \neq 1$, then the cyclicity of $\Gamma_2$ $($in any $C^\infty$ unfolding $(X_\lambda))$ is not greater than $3$.

        \item If $\beta_{1,2}\beta_{2,1}^{r_1} = 1$ and $ |\alpha_1| + |\alpha_2| \neq 0$, then the cyclicity of $\Gamma_2$ $($in any $C^\infty$ unfolding $(X_\lambda))$ is not greater than $\epsilon$, where:
        \[  \epsilon := \begin{cases}
        2 + k_1 + \lfloor \frac{k_1-1}{r_1} \rfloor& \text{if } k_1 - 1 < r_1(k_2-1),\\
        2 + k_2 + \lfloor \frac{k_2-1}{r_2} \rfloor& \text{if } k_2 - 1 < r_2(k_1-1). \end{cases} \]
        \end{enumerate}
    \end{theorem}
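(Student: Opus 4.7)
The plan is to bound the cyclicity by the maximal number of isolated zeros on a fixed interval $(0,x_0] \subset \sigma_1$ of the displacement function $\Delta_\lambda(x) := P_{1,\lambda}(x) - x$ of the first return map, uniformly in a $C^\infty$ unfolding parameter $\lambda$ in a small neighborhood of $0$. Since $r_1, r_2 \notin \mathbb{Q}$, both saddles remain analytically linearizable for small $\lambda$, so the corner Dulac maps can be written $D_{i,\lambda}(x) = x^{r_i(\lambda)}$ up to analytic diffeomorphisms, while the regular transitions $R_{i,\lambda}$ each acquire a breaking parameter $\mu_i(\lambda)$ measuring the splitting of the two heteroclinic connections. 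Composing as in \eqref{p1} yields an expansion of $\Delta_\lambda$ whose exponents lie in the additive semigroup generated by $1$, $r_1(\lambda)$, $k_1 - 1$, and $r_1(\lambda)(k_2-1)$, with $C^\infty$ coefficients in $\lambda$. The standard tool for bounding zeros in such expansions is the Rolle--Khovanskii derivation--division procedure: divide by the current leading monomial, differentiate, and iterate, so that each step strips off one monomial at the cost of at most one additional zero.

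For case (1), at $\lambda = 0$ the coefficient of $x$ is $\beta_{1,2}\beta_{2,1}^{r_1} - 1 \neq 0$, so the three principal monomials $1,\ x^{r_1(\lambda)},\ x$ appear in $\Delta_\lambda$ with independently controllable coefficients, and the remaining terms are of strictly higher order uniformly in $\lambda$. Applying the derivation--division procedure three times yields at most three isolated zeros of $\Delta_\lambda$ in $(0,x_0]$, hence cyclicity at most $3$.

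For case (2), the coefficient of $x$ vanishes at $\lambda = 0$, so the unperturbed displacement $\Delta_0$ is flat of order at least $\min\{k_1,\; 1 + r_1(k_2 - 1)\}$. Without loss of generality $k_1 - 1 < r_1(k_2-1)$, in which case the leading monomial of $\Delta_0$ is $x^{k_1}$ with coefficient proportional to $\alpha_1$; the quasi-analyticity of Dulac germs in analytic planar vector fields forces this coefficient to be nonzero (otherwise, if $\alpha_1 = 0$ then $\alpha_2 \neq 0$ and one carries out the same argument starting from $P_{2,\lambda}$). In the unfolding, the set of monomials of $\Delta_\lambda$ with exponent not exceeding $k_1$ consists of the constant term, the monomial $x^{r_1(\lambda)}$ produced by the second breaking, the integer powers $x,\, x^2,\, \ldots,\, x^{k_1}$ from the Taylor part of the composition, and the mixed monomials $x^{1 + j r_1(\lambda)}$ for each positive integer $j$ with $j r_1 < k_1 - 1$, i.e., $j \leq \lfloor (k_1 - 1)/r_1 \rfloor$. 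Ordering these by increasing real exponent and running the derivation--division procedure accordingly gives the bound $\epsilon = 2 + k_1 + \lfloor (k_1 - 1)/r_1 \rfloor$.

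The main obstacle I expect is the uniform control of the $C^\infty$ flat remainders at each step of the derivation--division procedure: one must ensure that the $o(\cdot)$ terms do not produce additional zeros on $(0, x_0]$ as $\lambda$ varies near $0$, which is typically handled by a Mourtada-type preparation theorem for first return maps of hyperbolic polycycles together with the smooth parameter-dependence of the analytic linearizations of the saddles (guaranteed by $r_i \notin \mathbb{Q}$ throughout a small neighborhood of $\lambda = 0$). A secondary delicate point is ruling out the collapse of the entire expansion to the identity when $\beta_{1,2}\beta_{2,1}^{r_1} = 1$ and $\alpha_1 = \alpha_2 = 0$; this is precisely the role of Il'yashenko's quasi-analyticity theorem, which guarantees that a trivial asymptotic expansion forces $P_1 = \mathrm{id}$ and hence excludes the focus-type setting under consideration.
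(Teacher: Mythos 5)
This statement is quoted from Mourtada \cite{mourtada94} and the paper gives no proof of it, so there is no internal argument to compare against; your proposal has to be judged as a reconstruction of Mourtada's original proof. As such, it follows the right strategy: bounding the number of small positive zeros of the displacement function of the unfolded return map by a Rolle--Khovanskii derivation--division on its Dulac-type expansion, and your monomial bookkeeping is the correct one. In case (2) the exponents below $k_1$ are indeed the constant term, $x^{r_1(\lambda)}$, the integer powers $x,\dots,x^{k_1}$, and the mixed powers $x^{1+jr_1}$ with $j\le\lfloor (k_1-1)/r_1\rfloor$ (the floor being unambiguous because $r_1\notin\mathbb Q$), which totals $2+k_1+\lfloor (k_1-1)/r_1\rfloor=\epsilon$; and the reduction of the case $\alpha_1=0$ to $P_2$, together with the use of Il'yashenko's quasi-analyticity to exclude the identity, is exactly how the non-triviality hypothesis is used.

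There are, however, two genuine gaps. First, your justification of the normal form in the unfolding is wrong as stated: irrationality of $r_i$ is not an open condition, so for $\lambda\neq 0$ the ratios $r_i(\lambda)$ sweep through rational (resonant) values in every neighborhood of $0$, and one cannot claim analytic (nor even $C^\infty$, uniformly in $\lambda$) linearization of the perturbed saddles from ``$r_i\notin\mathbb Q$ throughout a small neighborhood of $\lambda=0$.'' The correct substitute is a finitely smooth normal form valid for all nearby parameters (Sternberg-type, with finitely many resonant terms), or, as Mourtada actually does, direct asymptotic estimates on the parameter-dependent Dulac corner maps in a suitable function class; this is not a cosmetic point, since the whole derivation--division scheme must be run on functions that are only finitely smooth in $(x,\lambda)$. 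Second, the step you yourself flag as ``the main obstacle''--showing that after each division and differentiation the $o(\cdot)$ remainders stay negligible uniformly in $\lambda$, so that they contribute no extra zeros--is precisely the technical content of Mourtada's paper (his preparation of the transition maps and the properties of his class of almost-regular maps), and it is not supplied here. Note also that when the connections break the full return map $P_{1,\lambda}$ need not be defined on all of $(0,x_0]$, so the zero-counting is usually set up for an equation between two half-transition maps rather than for $P_{1,\lambda}(x)-x$ directly. In short: the skeleton and the count are right, but the proposal is an outline whose load-bearing analytic steps are named rather than proved.
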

    Note the surprising fact that the cyclicity, unlike in all the previous cases, cannot be read only from a single first return map.
\medskip

    We now state our 'fractal' version of Theorem~\ref{mrt3}. The goal is to read the Mourtada's upper bound on the cyclicity of the $2$-cycle $\Gamma_2\in\mathcal C_2$  from the Minkowski dimension of only one trajectory accumulating to $\Gamma_2$. However, as can be expected in the light of the comment before Theorem~\ref{mrt3}, the dimension of the spiral trajectory will not suffice. In order to read Mourtada's upper bound, we need additional fractal data, see Corollary~\ref{obs:a} below.

    \begin{theorem}[Fractal version of Theorem~\ref{mrt3}]\label{thm4}
        Let $X_0$ be an analytic vector field with a non-trivial $2$-cycle $\Gamma_2\in\mathcal{C}_2$.
        Let $r:=\min\{r_1,\,r_2\}$ be the minimal hyperbolicity ratio. Any spiral trajectory accumulating to the polycycle has the same Minkowski dimension $d \in [1,2)$.
        Moreover, the cyclicity of $\Gamma_2$ in $C^\infty$ unfoldings of $X_0$ is at most
        \begin{equation}\label{cycl}
        \left\lfloor 3 + (1+r)\frac{d-1}{2-d} \right\rfloor. \end{equation}


           \end{theorem}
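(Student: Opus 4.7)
\emph{Sketch.} The plan is to extract $d$ from the leading asymptotics of the first return maps \eqref{p1}--\eqref{p2} via Corollary~\ref{cor1}, and then rewrite Mourtada's bound from Theorem~\ref{mrt3} in terms of $d$. Corollary~\ref{cor1} applied to the two regular sides of $\Gamma_2$ gives
$$\dim_B S = 1 + \max\{\dim_B Y_1, \dim_B Y_2\},$$
where $Y_i := S\cap\sigma_i$ is an orbit of $P_i$ accumulating at $0$. Since the Minkowski dimension of such an orbit is determined by the germ of $P_i$ at $0$ and not by the initial point, $d:=\dim_B S$ is the same for every spiral. Throughout, assume without loss of generality that $r = r_1 < 1 < r_2$. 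Setting $\mu := \beta_{1,2}\beta_{2,1}^{r_1}$, the identity $\beta_{2,1}\beta_{1,2}^{r_2} = \mu^{r_2}$ (recorded just after \eqref{p2}) implies that $\mu \neq 1$ makes both $P_1, P_2$ hyperbolic, so \cite[Lemma 1]{neveda07} gives $\dim_B Y_i = 0$, whence $d = 1$ and \eqref{cycl} collapses to $\lfloor 3\rfloor = 3$, matching Theorem~\ref{mrt3}(1).

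\emph{The tangent-to-identity case $\mu = 1$.} From \eqref{p1}--\eqref{p2} I read off
$$P_i(x) - x = c_i^{(a)} x^{E_i^{(a)}} + c_i^{(b)} x^{E_i^{(b)}} + \text{h.o.t.},$$
with $E_1^{(a)}=k_1,\ E_1^{(b)}=r_1(k_2-1)+1,\ E_2^{(a)}=k_2,\ E_2^{(b)}=r_2(k_1-1)+1$. Let $E_i$ be the smallest of $E_i^{(a)},E_i^{(b)}$ whose coefficient is nonzero; the hypothesis $|\alpha_1|+|\alpha_2|\neq 0$ (combined with $\mu=1$) guarantees $E_i<\infty$, even when one of $\alpha_1,\alpha_2$ vanishes. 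The key technical input I need is
$$\dim_B Y_i = 1 - 1/E_i,$$
which I would deduce from the standard orbit asymptotic $x_n \simeq n^{-1/(E_i-1)}$ for maps of the form $P_i(x) = x - c\,x^{E_i} + o(x^{E_i})$, combined with a nucleus--tail computation as in Proposition~\ref{lm1}. Because the dichotomy established in the paragraph before Theorem~\ref{mrt3} forces exactly one of $k_1-1 < r_1(k_2-1)$ or $k_2-1 < r_2(k_1-1)$, and since $r_1<1<r_2$, a direct comparison shows $E_2>E_1$ in both subcases, so $d = 2 - 1/E_2 \in [1,2)$.

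\emph{Matching the cyclicity bound.} In the subcase $k_1-1<r_1(k_2-1)$ one has $E_2 = r_2(k_1-1)+1$, hence $k_1-1 = (d-1)/(r_2(2-d))$; since $k_1-1\in\mathbb{N}$, Mourtada's bound rewrites as
$$2 + k_1 + \lfloor r_2(k_1-1)\rfloor = \lfloor 3 + (1+r_2)(k_1-1)\rfloor = \left\lfloor 3 + (1+r)\frac{d-1}{2-d}\right\rfloor,$$
using $r_1=1/r_2=r$. The subcase $k_2-1<r_2(k_1-1)$ is symmetric: $E_2 = k_2$, $k_2-1=(d-1)/(2-d)$, and Mourtada's bound is $\lfloor 3+(1+r_1)(k_2-1)\rfloor$, again equal to \eqref{cycl}. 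The main obstacle in this plan is the dimension formula $\dim_B Y_i = 1-1/E_i$ for a generally real (in fact irrational) exponent $E_i$: \cite[Theorem 1]{neveda07} is stated for integer powers, so the real-exponent version has to be justified by a bi-Lipschitz comparison of the orbit with the model sequence $\{n^{-1/(E_i-1)}\}_n$ and the nucleus--tail computation of Proposition~\ref{lm1}.
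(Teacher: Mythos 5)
Your proposal is correct and follows essentially the same route as the paper: extract $\dim_B Y_i$ from the tangency exponents of the first return maps \eqref{p1}--\eqref{p2} via \cite{neveda07}, combine with Corollary~\ref{cor1}, and rewrite Mourtada's bound from Theorem~\ref{mrt3} in terms of $d$ and $r$ (your case split on the dichotomy with $r_1<1<r_2$ fixed is just a relabeling of the paper's split on $r_1\gtrless 1$ with the dichotomy branch fixed). The only ``obstacle'' you flag is not one: \cite[Theorem 1]{neveda07} is stated for real exponents, so the formula $\dim_B Y_i = 1 - 1/E_i$ with $E_i$ irrational is covered directly, which is how the paper uses it.
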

           
        \begin{proof}
            By Theorem \ref{mrt3}, if $\beta_{1,2}\beta_{2,1}^{r_1} \neq 1$ then the cyclicity of the polycycle is at most $3$.
            On the other hand, by \eqref{p1} and \eqref{p2}, the first return maps $P_1$ and $P_2$ are hyperbolic and, therefore, the intersections of any spiral with a transversal to the polycycle has Minkowski dimension $0$ (see \cite[Lemma 1]{neveda07}).
            By Corollary~\ref{cor1} we conclude that $d = 1$.

            Consider now the case when $\beta_{1,2}\beta_{2,1}^{r_1} = 1$. By \eqref{p1} and \eqref{p2}, it follows that $|\alpha_1| + |\alpha_2| \neq 0$. Indeed, if $|\alpha_1| + |\alpha_2| = 0$, the first return maps are equal to the identity and the polycycle is trivial (of center type), which is a contradiction with the assumption. Therefore, in \eqref{p1} and \eqref{p2}, at least one of $k_1$ and $k_2$ is finite. Without loss of generality (see the discussion at the beginning of the section) we assume that
            \begin{equation}\label{eq:cas}
                k_1 - 1 < r_1(k_2-1).
            \end{equation}
            The Minkowski dimension of an orbit of $P_1$ is $1 - \frac{1}{k_1}$ and the Minkowski dimension of an orbit of $P_2$ is  $ 1 - \frac{1}{r_2(k_1-1)+1}$  (see \cite[Theorem 1]{neveda07}). Now we distinguish two cases: 
            \begin{enumerate}

            \item $r_1 > 1$ 
            
            Since $r_1r_2=1$, it follows that $r_2<1$, so $r_2(k_1-1) < k_1-1$.
            By Corollary~\ref{cor1} we conclude that $d = 2 - \frac{1}{k_1} \in \mathbb{Q}$.
            
            By Theorem~\ref{mrt3}, cyclicity in case \eqref{eq:cas} is at most
            \[ 2 + k_1 + \left\lfloor \frac{k_1-1}{r_1} \right\rfloor = \left\lfloor 2 + k_1 + \frac{k_1-1}{r_1} \right\rfloor = \left\lfloor 3 + \frac{d-1}{2-d} + r_2\frac{d-1}{2-d} \right\rfloor. \] 

            \item $r_1 < 1$ 
            
            It follows that $r_2>1$, so $r_2(k_1-1) > k_1-1.$ Similarly as in the previous case we get that $d = 2 - \frac{1}{1 + r_2(k_1-1)} \not\in \mathbb{Q}$.
            The cyclicity of the polycycle is at most
            \[ 2 + k_1 + \left\lfloor \frac{k_1-1}{r_1} \right\rfloor = \left\lfloor 2 + k_1 + \frac{k_1-1}{r_1} \right\rfloor = \left\lfloor 3 + r_1\frac{d-1}{2-d} + \frac{d-1}{2-d} \right\rfloor. \] 
         \end{enumerate}
         In the symmetrical case $k_2-1<r_2(k_1-1)$ in \eqref{eq:cas} similar conclusions hold, and the statement of the theorem follows.
        \end{proof}
 \smallskip

        The following corollary unites previous results in non-degenerate and degenerate $\mathcal{C}_1$ and $\mathcal{C}_2$ cases.

        \begin{corollary}\label{obs:a}
        Let $\Gamma_2$ be a non-trivial $2$-saddle polycycle of an analytic vector field such that $\Gamma_2\notin\mathcal C_3$. Let $S$ be its one accumulating spiral trajectory. 
        \begin{enumerate}
        \item If $\dim_B S=1$, then the cyclicity is at most $3$.
        \item If $d:=\dim_B S\in(1,2)$, then the upper bound on cyclicity is given by formula \eqref{cycl} of Theorem~\ref{thm4}, and
        $$
        r=\min\left\{\frac{d_1-1}{d_2-1},\frac{d_2-1}{d_1-1}\right\},
        $$
        $d_1\in(0,1)$ and $d_2\in(0,1)$ Minkowski dimensions of sequences obtained as intersections of spiral $S$ with transversals to the two heteroclinic connections. Note also that $d=1+\max\{d_1,d_2\}$.
        \end{enumerate}
        \end{corollary}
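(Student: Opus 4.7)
The plan is to proceed by case analysis on the value of $d=\dim_B S$, and in each case to translate the fractal information $(d,d_1,d_2)$ back into the analytic data $(r_1,r_2,k_1,k_2,\beta_{1,2},\beta_{2,1},\alpha_1,\alpha_2)$ attached to the first return maps $P_1,P_2$ on transversals to the two heteroclinic connections. The heart of the argument is that each of the three configurations $\mathcal C_0$ (non-degenerate), $\mathcal C_1$, $\mathcal C_2$ is distinguished by the hyperbolic-versus-tangent-to-identity character of $P_1,P_2$, and this character is exactly what the dimensions $d_1,d_2$ detect via \cite[Lemma~1]{neveda07}.

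For part (1), suppose $\dim_B S = 1$. By Corollary~\ref{cor1} we have $d = 1+\max\{d_1,d_2\}$, so $d_1=d_2=0$. By \cite[Lemma~1]{neveda07} both first return maps are hyperbolic; inspecting \eqref{p1}--\eqref{p2} this rules out $\mathcal C_2$ with $\beta_{1,2}\beta_{2,1}^{r_1}=1$, so we are either in the non-degenerate case (cyclicity at most $2$ by Theorem~\ref{teo0}), in $\mathcal C_1$ (cyclicity at most $2$ by Theorem~\ref{teo1}), or in $\mathcal C_2$ with $\beta_{1,2}\beta_{2,1}^{r_1}\neq 1$ (cyclicity at most $3$ by Theorem~\ref{mrt3}(1)). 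In every sub-case the bound $3$ holds.

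For part (2), suppose $d\in(1,2)$. Then $\max\{d_1,d_2\}>0$, so at least one of $P_1,P_2$ is tangent to identity, which forces $\Gamma_2\in\mathcal C_2$ with $\beta_{1,2}\beta_{2,1}^{r_1}=1$ and $|\alpha_1|+|\alpha_2|\neq 0$. Theorem~\ref{thm4} then delivers the cyclicity bound \eqref{cycl}, so it only remains to express $r=\min\{r_1,r_2\}$ in terms of $d_1,d_2$. Working in the setting of the proof of Theorem~\ref{thm4}, assume without loss of generality $k_1-1<r_1(k_2-1)$. Then from \cite[Theorem~1]{neveda07} we have $d_1 = 1-1/k_1$ and $d_2 = 1-1/(r_2(k_1-1)+1)$, so that
\[
k_1-1 = \frac{d_1}{1-d_1},\qquad r_2(k_1-1)=\frac{d_2}{1-d_2},
\]
which determines both $r_1$ and $r_2$ as explicit rational functions of $(d_1,d_2)$; taking the minimum then yields $r$ in the stated closed form, and the symmetric case $k_2-1<r_2(k_1-1)$ follows by relabeling. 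Also, $d = 1+\max\{d_1,d_2\}$ is already given by Corollary~\ref{cor1}.

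The main obstacle is exactly this last algebraic step: the identification of $\min\{r_1,r_2\}$ with a symmetric expression in $(d_1,d_2)$ must be checked consistently across the two sub-cases $r_1>1$ and $r_1<1$, and the ordering of $d_1,d_2$ with respect to $\max$/$\min$ must be tracked so that the expression in the Corollary picks out the correct branch. Everything else is a direct consequence of the previously established theorems: Theorems~\ref{teo0}, \ref{teo1}, \ref{mrt3}, \ref{thm4}, Corollary~\ref{cor1}, and the dimension-multiplicity dictionary of \cite{neveda07}.
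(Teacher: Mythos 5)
Your overall strategy coincides with the paper's: part (1), and the reduction of part (2) to the case $\Gamma_2\in\mathcal C_2$ with $\beta_{1,2}\beta_{2,1}^{r_1}=1$, go through the same dimension--multiplicity dictionary of \cite{neveda07} together with Corollary~\ref{cor1} and Mourtada's Theorems~\ref{teo0}, \ref{teo1}, \ref{mrt3}, and these steps are sound. The genuine gap is the one step you defer (``taking the minimum then yields $r$ in the stated closed form'') and even flag as ``the main obstacle'' without carrying out. Finishing the algebra from your own (correct) relations $k_1-1=\frac{d_1}{1-d_1}$ and $r_2(k_1-1)=\frac{d_2}{1-d_2}$ gives
\[
r_2=\frac{d_2(1-d_1)}{d_1(1-d_2)},\qquad r_1=\frac{d_1(1-d_2)}{d_2(1-d_1)},
\]
so that $\min\{r_1,r_2\}=\min\left\{\frac{d_1(1-d_2)}{d_2(1-d_1)},\frac{d_2(1-d_1)}{d_1(1-d_2)}\right\}$. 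This is \emph{not} the expression $\min\left\{\frac{d_1-1}{d_2-1},\frac{d_2-1}{d_1-1}\right\}$ in the statement: writing $d_i=1-1/\gamma_i$ with $\gamma_1=k_1$ and $\gamma_2=1+r_2(k_1-1)$ the multiplicities of $P_1,P_2$ read off from \eqref{p1}--\eqref{p2}, the statement's formula equals $\min\{\gamma_2/\gamma_1,\gamma_1/\gamma_2\}$, whereas the true value is $\min\{(\gamma_2-1)/(\gamma_1-1),(\gamma_1-1)/(\gamma_2-1)\}$; the conjugacy $P_2=T\circ P_1\circ T^{-1}$ with $T(x)\sim\beta x^{r_1}$ rescales $\gamma-1$, not $\gamma$, by the hyperbolicity ratio. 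A concrete check: $r_1=\sqrt2$, $k_1=k_2=2$ gives $d_1=1/2$, $d_2=1-\frac{1}{1+1/\sqrt2}\approx0.414$, so the statement's formula returns $\approx0.854$ while $r=r_2=1/\sqrt2\approx0.707$.

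So the missing verification sits exactly where the paper's own proof is flawed: the paper asserts $r_1=\gamma_1/\gamma_2$ and $r_2=\gamma_2/\gamma_1$, which is incompatible with the explicit expansions \eqref{p1}--\eqref{p2} (it forces $r_1=r_2=1$). Your intermediate relations are the correct ones, and had you completed the computation instead of asserting agreement with the printed closed form, you would have arrived at the corrected formula $r=\min\left\{\frac{d_1(1-d_2)}{d_2(1-d_1)},\frac{d_2(1-d_1)}{d_1(1-d_2)}\right\}$ rather than the one in the Corollary. As written, the final identification in your proof fails, and the statement itself needs this amendment for part (2) to be true.
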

        \begin{proof} By Theorems~\ref{thm5}, \ref{thm6} and \ref{thm4}, $\dim_B S=1$ for non-degenerate cycles, for family $\mathcal C_1$ and in the case $r_1r_2\neq 1$ in $\mathcal{C}_2$. In all those cases the first return maps are strongly hyperbolic or hyperbolic. In all these families, by Theorems~\ref{teo0}, \ref{teo1} and \ref{mrt3} of Mourtada, the upper bound on cyclicity is $3$. On the other hand, if $r_1\cdot r_2=1$ and $r_1,\,r_2\notin\mathbb Q$, the first return maps on both transversals are tangent to the identity, with multiplicities $\gamma_1$ and $\gamma_2$ striclty greater than $1$. It is easy to check by e.g. \cite{neveda07} that $r_1=\frac{\gamma_1}{\gamma_2}$ and $r_2=\frac{\gamma_2}{\gamma_1}$. By \cite{neveda07}, $d_1=1-\frac{1}{\gamma_1}$ and $d_2=1-\frac{1}{\gamma_2}$, and the above formula for $r:=\min\{r_1,r_2\}$ follows.
        \end{proof}

\begin{remark}
    Note that a trivial saddle polycycle is of center type (no spiral trajectories). The first return map on transversals to heteroclinic connections is equal to the identity. The Minkowski dimension of just one periodic trajectory close to the polycycle is $1$ (moreover, of finite length). On the other hand, the continuum of periodic trajectories accumulating on the polycycle is an open set of non-zero area and its Minkowski dimension is equal to $2$. None of the two makes much sense to consider. Therefore, we exclude this case from our fractal considerations. 
    
    In all non-trivial cases, by quasi-analyticity of first return maps around hyperbolic polycycles of planar analytic vector fields, the first return map on transversals is never the identity, but either tangent to the identity or (strongly) hyperbolic. Therefore its orbits on transversals to heteroclinic connections have Minkowski dimension belonging to $[0,1)$, by e.g. \cite{neveda07}. By Corollary~\ref{cor1}, the Minkowski dimension of a spiral trajectory $S$ around the non-trivial hyperbolic saddle polycycle then satisfies $\dim_B S\in [1,2)$. 
\end{remark}
     
\section*{Acknowledgements}

This research was supported by Croatian Science Foundation (HRZZ) Grant PZS-2019-02-3055 from Research Cooperability program funded by the European Social Fund. The first two authors are supported by the Special Research Fund (BOF number: BOF21BL01) of Hasselt University. The third author is supported by Croatian Science Foundation (HRZZ) Grant UIP-2017-05-1020. The first and the third author are also supported by the bilateral Hubert-Curien Cogito grant 2023-24.

\bibliographystyle{plain}

\end{document}